\setlist[itemize]{leftmargin=2em}
\newcommand{\N}{\mathbb{N}}
\newcommand{\R}{\mathbb{R}}
\newcommand{\ka}{\kappa}
\newcommand{\al}{\alpha}
\newcommand{\eps}{\varepsilon}
\newcommand{\T}{\mathbb{T}}
\newtheorem{Theorem}{Theorem}
\newtheorem{Definition}{Definition}[section]
\newtheorem{Proposition}[Definition]{Proposition}
\newtheorem{Lemma}[Definition]{Lemma}
\theoremstyle{definition}
\newtheorem{Remark}[Definition]{Remark}
\newcommand{\bthm}{\begin{Theorem}}
	\newcommand{\ethm}{\end{Theorem}}
\newcommand{\bpr}{\begin{Proposition}}
	\newcommand{\epr}{\end{Proposition}}
\newcommand{\blm}{\begin{Lemma}}
	\newcommand{\elm}{\end{Lemma}}
\newcommand{\bex}{\begin{Exercise}}
	\newcommand{\eex}{\end{Exercise}}
\newcommand{\be}{\begin{equation}}
	\newcommand{\ee}{\end{equation}}
\newcommand{\beal}{\begin{aligned}}
	\newcommand{\enal}{\end{aligned}}
\newcommand{\brm}{\begin{Remark}}
	\newcommand{\erm}{\end{Remark}}
\begin{document}
	\begin{CJK*}{GBK}{kai}
		
		\title[Non-KAM Invariant Circles for Area-Preserving Twist Maps]{  On Non-KAM Invariant Circles for Area-Preserving Twist Maps}
		
        \author{Jiashen Guo}
		\address{School of Mathematics and Statistics, Beijing Institute of Technology, Beijing 100081, China}
		\email{guojiashen775@gmail.com}

		\author{Yi Liu}
		\address{School of Mathematics and Statistics, Beijing Institute of Technology, Beijing 100081, China}
		\email{yiliu111@foxmail.com}
		
		\author{Lin Wang}
		\address{School of Mathematics and Statistics, Beijing Institute of Technology, Beijing 100081, China}
		\email{lwang@bit.edu.cn}

		\subjclass[2020]{37J40,  37E40}
		
		\keywords{Area-Preserving Twist Maps, Arnold's Family, Invariant Circles}

		\begin{abstract}
					 In this note, we investigate the dynamics of invariant circles in area-preserving twist maps. The invariant circles under consideration lie beyond the applicability of classical KAM theory, as the perturbations involved exceed the scope of standard KAM methods. By integrating both constructive and non-constructive techniques, we establish several results on the existence and breakdown of such non-KAM invariant circles. These findings provide new evidence in support of affirmative answers to two open questions posed by Mather in 1998.
		\end{abstract}

		\maketitle

		\tableofcontents
		%\newpage
		\section{\sc Introduction}

		The existence of invariant circles (i.e., homotopically non-trivial invariant curves) for area-preserving twist maps was first established by Moser in his seminal work \cite{Mo1}, commonly referred to as \emph{Moser's twist theorem}. This groundbreaking result is also recognized as one of the foundational contributions to the KAM (Kolmogorov-Arnold-Moser) theory, where the map was initially required to be of class \( C^{333} \). The regularity condition was significantly relaxed to \( C^{4+\eps} \) by R\"{u}ssmann \cite{R1}. Subsequently, Moser \cite[Page 53]{Mo2} asserted that \( C^{3+\eps} \) regularity suffices for the existence of invariant circles with constant type rotation numbers by employing more refined estimates within the KAM framework. The full proof of this assertion was independently provided by R\"{u}ssmann \cite{R2} and Herman \cite{H1}, each utilizing distinct methodologies. In \cite{R2}, R\"{u}ssmann introduced a novel iteration process that yields invariant circles which are merely continuous. In contrast, Herman's approach in \cite{H1} leverages the Schauder fixed point theorem, building upon his celebrated work on circle diffeomorphisms (\cite{H11}). Furthermore, Herman \cite{H33}  showed that an invariant circle with a constant type rotation number persists under small perturbations in the $C^{3}$ topology. By a geometrical approach, Herman \cite{H1}   constructed an example demonstrating that each invariant circle can be destroyed by a small $C^{3-\eps}$ perturbation. This implies that $C^{3}$ regularity is optimal.
		
		The preceding analysis reveals that for a given rotation numbers, one cannot generally expect invariant circles to persist under $C^r$-small perturbations when $r < 3$. Such persistence questions lie beyond the reach of classical KAM theory and its modern extensions. Indeed, even under favorable conditions, constructing explicit examples where such non-KAM invariant circles either persist or are destroyed remains a  challenge. We begin by formalizing the central object of study:

\begin{Definition}
A \emph{non-KAM invariant circle} is an invariant circle of an integrable area-preserving twist map that persists under $C^r$-small perturbations for some $r < 3$, but  $C^3$-large.
\end{Definition}

As a preliminary approach to identifying such invariant circles, we assume for simplicity that the perturbations themselves are either $C^\infty$ or real-analytic ($C^\omega$). By the celebrated Birkhoff graph theorem~\cite{B20} (see also \cite[Chapter I]{H1}), if the area-preserving twist map is of class $C^1$, then any invariant circle must be a Lipschitz graph. Therefore, in what follows, we refer to \emph{invariant circles} as \emph{invariant graphs}.

Under the above definition, we focus primarily on the following questions:
\begin{enumerate}
\item[(I)] \textbf{Existence:} \textit{Can one construct examples of area-preserving twist maps that admit such non-KAM invariant graphs?}

\item[(II)] \textbf{Dynamics:} \textit{What is the dynamics of the map restricted to such non-KAM invariant graphs? In particular, are there constraints on the rotation number as a circle homeomorphism?}

\item[(III)] \textbf{Fragility:} \textit{How fragile are these invariant graphs? For instance, can they be destroyed by $C^\infty$-small perturbations?}

\item[(IV)] \textbf{Smoothness:} \textit{How pathological can the regularity of these invariant graphs as functions be?}
\end{enumerate}

		Herman and Mather made fundamental contributions to the development of twist map theory during the 1980s and 1990s. With the aid of the powerful tools they developed, the study of area-preserving twist maps has achieved significant progress. However, certain questions of a similar nature, which remain not fully resolved to this day, were explicitly raised by both of them and others in various contexts; see, for example, \cite{FKO, H98, MY}. In this note, we aim to contribute to the study of these questions by refining and extending the tools introduced by Herman and Mather.
	\vspace{1ex}	
\subsection{Basic Setting}		
	To state the main results, let us recall some notions and notations. Denote \( \mathbb{T} := \mathbb{R}/\mathbb{Z} \) the flat circle. Let \( |\cdot| \) denote the Euclidean norm on \( \mathbb{R} \). The induced flat metric on \( \mathbb{T} \) is defined as
		\[
		\|\cdot\| := \inf_{p \in \mathbb{Z}} |\cdot + p|.
		\]
		Let \( \alpha \) be an irrational number, expressed via its continued fraction expansion as \( \alpha = [a_0, a_1, \ldots] \). The convergents \( \left\{ {p_n}/{q_n} \right\}_{n \in \mathbb{N}} \) of \( \alpha \) are defined by \( {p_n}/{q_n} = [a_0, \ldots, a_n] \). Without loss of generality, we assume \( \alpha \in [0, 1) \) and \( q_n > 0 \) for all \( n \in \mathbb{N} \).
		
		An irrational number \( \alpha \) is said to be of \emph{constant type} if the coefficients \( a_i \) in its continued fraction expansion are uniformly bounded. We write \( \mathcal{C} \) for the set of constant type irrational numbers in \( (0, 1) \). It is well known that \( \mathcal{C} \) has zero Lebesgue measure but full Hausdorff dimension. More generally, we say that \( \alpha \) is a Diophantine number if there exist constants \( D > 0 \) and \( \tau \geq 0 \) such that for all integers \( p, q \) with \( q \neq 0 \),
\begin{equation}\label{diop}
\left|\alpha - \frac{p}{q}\right| \geq \frac{D}{|q|^{2+\tau}}.
\end{equation}
To emphasize the dependence on the parameters \( D \) and \( \tau \), we also refer to such \( \alpha \) as \((D, \tau)\)-\emph{Diophantine numbers}. Hence, any number of constant type is a \((D, 0)\)-Diophantine number, and vice versa. Irrational numbers that are not Diophantine are called \emph{Liouville numbers}. The set of Brjuno numbers is a larger class of irrationals than the Diophantine numbers. The Diophantine condition can be expressed in terms of this expansion as
\begin{equation}\label{dionu}
\ln q_{n+1} \leq C \ln q_n, \quad n \in \mathbb{N}.
\end{equation}
The Brjuno condition can be stated as
\begin{equation}\label{brun}
\sum_{n \geq 0} \frac{\ln q_{n+1}}{q_n} < +\infty.
\end{equation}

We denote by \( \mathrm{Diff}_+^r(\mathbb{R}) \) (resp. \( \mathrm{Diff}_+^r(\mathbb{T}) \)) the group of \( C^r \) orientation-preserving diffeomorphisms on \( \mathbb{R} \) (resp. \( \mathbb{T} \)), where \( 0 \leq r \leq \omega \). Define
		\[
		D^r(\mathbb{T}) := \left\{ f \in \mathrm{Diff}_+^r(\mathbb{R})\ \middle|\ f - \mathrm{Id} \in C^r(\mathbb{T}) \right\},
		\]
		which is the universal covering space of \( \mathrm{Diff}_+^r(\mathbb{T}) \). For \( f \in D^r(\mathbb{T}) \), the rotation number \( \rho(f) \) is well defined. Define the following sets:
		\[
		\mathcal{F}_\al^r := \left\{ f \in D^r(\mathbb{T})\ \middle|\ \rho(f) = \alpha \right\}, \quad
		\mathcal{O}_\al^r := \left\{ h^{-1} \circ R_\alpha \circ h\ \middle|\ h \in D^r(\mathbb{T}) \right\},
		\]
		\[
		{H}^r := \left\{ \phi \in C^r(\mathbb{T})\ \middle|\ \int_{\mathbb{T}} \phi(\theta)\, d\theta = 0 \right\}.
		\]
		A classical result by Herman and Yoccoz \cite{H11,Y1} shows that \( \mathcal{F}_\al^\infty = \mathcal{O}_\al^\infty \) when \( \alpha \) is a Diophantine number.

		Given $\beta \in \mathbb{R}$, we consider the family of maps \( f_\beta^{\phi} : \mathbb{T} \times \mathbb{R} \to \mathbb{T} \times \mathbb{R} \) defined by
\begin{equation}\label{Fpar}
f_\beta^{\phi}(x, y) = \bigl(x + \beta + y + \phi(x),\ y + \phi(x)\bigr),
\end{equation}
where \( \phi \) is a \(1\)-periodic smooth function with zero mean: \( \int_0^1 \phi(x)\, dx = 0 \). Then \( f_\beta^{\phi} \) is an exact area-preserving twist map. In particular, \( f_\beta^0(x,y) = (x + \beta + y,\, y) \) is integrable. Let \( F_\beta^{\phi} : \mathbb{R} \times \mathbb{R} \to \mathbb{R} \times \mathbb{R} \) denote the lift of \( f_\beta^{\phi} \). If it exists, we denote an invariant graph of \( f_\beta^{\phi} \) by
\[
\Gamma := \{(x, \Psi(x)) \mid x \in \mathbb{T}\}.
\]
For simplicity, we use the same notations for the following objects and their lifts: \( \phi, \Gamma, \Psi, g \).

\subsection{Related work by Herman}

A key observation of Herman~\cite{H1} (also noted by Mather \cite{M1}) is that \( F_\beta^{\phi} \) admits an invariant graph \( \Psi \) over \( \mathbb{T} \) if and only if there exists \( g_\beta \in D^r(\mathbb{T}) \) such that
\begin{equation}\label{hrfo}
\frac{g_\beta + g_\beta^{-1}}{2} = \mathrm{Id} + \frac{1}{2} \phi.
\end{equation}
Moreover, the circle diffeomorphism \( g_\beta \) can be written as
\[
g_\beta(x) = x + \beta + \Psi(x) + \phi(x), \quad x \in \mathbb{R}.
\]
For \( 0 \leq r \leq \omega \), define the map \( \Phi: \mathcal{F}_\alpha^r \to {H}^r \) by
\[
\Phi(g) := \frac{g + g^{-1}}{2} - \mathrm{Id}.
\]

\begin{Remark}
Herman~\cite{H1} originally established formula (\ref{hrfo}) for maps of the form
\[
F(x,y) = (x + y,\, y + \phi(x + y)).
\]
Through a straightforward modification (see Lemma~\ref{herob}), one can show that \eqref{hrfo} remains valid for the parameter-dependent family \( F_\beta^{\phi} \), and the map \( \Phi \) is independent of \( \beta \); that is, for any \( \beta_1, \beta_2 \in \mathbb{R} \),
\[
\Phi(g_{\beta_1}) = \Phi(g_{\beta_2}).
\]
This elementary fact plays a crucial role in the proofs of Theorems~\ref{M0} and~\ref{M01}.
	\end{Remark}
		\begin{Remark}
			Let \( \mathrm{Lip}_+(\mathbb{R}) \) denote the group of bi-Lipschitz orientation-preserving maps on \( \mathbb{R} \). Define
			\[
			D^{\mathrm{Lip}}(\mathbb{T}) := \left\{ f \in \mathrm{Lip}_+(\mathbb{R})\ \middle|\ f - \mathrm{Id} \text{ is Lipschitz on } \mathbb{T} \right\},
			\]
			\[
			\mathcal{F}_\al^{\mathrm{Lip}} := \left\{ f \in D^{\mathrm{Lip}}(\mathbb{T})\ \middle|\ \rho(f) = \alpha \right\}.
			\]
			The Birkhoff graph theorem implies $\Phi(\mathcal{F}_\al^0) = \Phi(\mathcal{F}_\al^{\mathrm{Lip}})$.
			\end{Remark}	
		
	By Herman~\cite[Chapter II]{H1}, we know
\begin{Proposition}\label{Hp123}
\
\begin{itemize}
\item [(i)] \( \Phi(\mathcal{F}_\al^\infty) \) is open in \( {H}^\infty \).
\item [(ii)] $\Phi(\mathcal{F}_\al^0)\cap H^\infty$ is closed in $H^\infty$ under the $C^1$ topology.
\item [(iii)] \( \Phi \) is injective for irrational \( \alpha \).
\end{itemize}
\end{Proposition}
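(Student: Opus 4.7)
My plan is to treat the three assertions separately, following the template of \cite[Chapter II]{H1}.

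\emph{Part (iii).} Given $g_1,g_2\in\mathcal{F}_\al^r$ with $\Phi(g_1)=\Phi(g_2)$, I would write $g_i=h_i^{-1}\circ R_\al\circ h_i$ using the (semi-)conjugacy to rotation, and set $u_i:=h_i^{-1}$ and $H:=h_2\circ h_1^{-1}$. Substituting into $g_1+g_1^{-1}=g_2+g_2^{-1}$ yields the functional identity
\[
u_2\bigl(H(y+\al)\bigr)+u_2\bigl(H(y-\al)\bigr)=u_2\bigl(H(y)+\al\bigr)+u_2\bigl(H(y)-\al\bigr),
\]
together with $u_1=u_2\circ H$. Using strict monotonicity of $u_2$ and a maximum-principle argument at the extremum of the symmetric combination, I would extract $H(y+\al)+H(y-\al)=2H(y)$; the $\al$-difference of $H$ is then a constant, and since $H$ has degree one this constant must equal $\al$, so $H$ commutes with $R_\al$. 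Minimality of $R_\al$ forces $H=R_\beta$, and plugging back in gives $g_2=g_1$.

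\emph{Part (i).} Fix $g_0\in\mathcal{F}_\al^\infty$ with $\phi_0:=\Phi(g_0)$, and parametrize nearby diffeomorphisms by $g=g_0\circ(\mathrm{Id}+u)$ for $u\in C^\infty(\mathbb{T})$. The equation $\Phi(g)=\phi$ becomes a nonlinear equation for $u$ whose linearization at $u=0$ is a cohomological equation over the minimal map $g_0$. The $C^\infty$ regularity of $g_0$ supplies the linearizing structure needed to invert this equation with a tame loss of derivatives, and a Schauder fixed-point scheme in the spirit of \cite[Chapter II]{H1} then produces a solution $u\in C^\infty(\mathbb{T})$ for every $\phi$ sufficiently $C^\infty$-close to $\phi_0$, establishing openness of $\Phi(\mathcal{F}_\al^\infty)$ in $H^\infty$.

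\emph{Part (ii).} I would argue by compactness. Take $\phi_n=\Phi(g_n)\in\Phi(\mathcal{F}_\al^0)\cap H^\infty$ with $\phi_n\to\phi$ in the $C^1$ topology, where $\phi\in H^\infty$. Each $g_n$ arises via \eqref{hrfo} as the first-coordinate dynamics on an invariant graph $\Psi_n$ of the twist map $f_\beta^{2\phi_n}$; by Birkhoff's graph theorem $\Psi_n$ is Lipschitz, and the $C^1$ bound on $\phi_n$ supplies a uniform Lipschitz bound on $g_n=\mathrm{Id}+\beta+\Psi_n+2\phi_n$. Arzel\`a--Ascoli extracts a subsequence $g_{n_k}\to g$ in $C^0$; passing to the limit in $\tfrac12(g_{n_k}+g_{n_k}^{-1})-\mathrm{Id}=\phi_{n_k}$ gives $\Phi(g)=\phi$, and continuity of the rotation number under $C^0$ convergence yields $\rho(g)=\al$, hence $g\in\mathcal{F}_\al^0$ and $\phi\in\Phi(\mathcal{F}_\al^0)\cap H^\infty$.

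The hardest step is (i): openness must hold for every irrational $\al$, including Liouville ones, so the usual Diophantine small-divisor estimates are not directly available. The workaround is to exploit that the base point $g_0$ is itself $C^\infty$, so its own linearizing structure carries the arithmetic content needed to run the fixed-point scheme. Parts (ii) and (iii) are comparatively soft, reducing respectively to Arzel\`a--Ascoli plus the graph theorem, and to the maximum-principle extraction described above.
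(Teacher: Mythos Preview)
The paper does not prove this proposition at all; it simply attributes it to Herman \cite[Chapter II]{H1}. So there is no in-paper argument to compare against. Your sketch of (ii) is the standard Birkhoff/Arzel\`a--Ascoli compactness argument and is fine. Parts (i) and (iii), however, contain real gaps.

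For (i), your assertion that ``openness must hold for every irrational $\alpha$, including Liouville ones'' is false, and your proposed workaround cannot succeed. Take $g_0=R_\alpha\in\mathcal{F}_\alpha^\infty$, so $\Phi(g_0)=0$. If $\alpha$ is Liouville, Mather's destruction theorem \cite{M4} (invoked in this very paper) produces arbitrarily $C^\infty$-small $\phi$ for which $F^\phi$ has no invariant circle of rotation number $\alpha$; by Lemma~\ref{herob} such $\phi$ lie outside $\Phi(\mathcal{F}_\alpha^0)\supseteq\Phi(\mathcal{F}_\alpha^\infty)$, so $0$ is not an interior point and the image is not open. Your ``linearizing structure of $g_0$'' does not rescue this: for Liouville $\alpha$ an element of $\mathcal{F}_\alpha^\infty$ need not be $C^\infty$-linearizable, so there is no such structure to exploit. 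In Herman's argument (and in the paper's actual use of (i) in the proof of Theorem~\ref{M1}, where the hypothesis ``$\alpha$ Diophantine'' is made explicit) one works on $\mathcal{O}_\alpha^\infty$ and invokes Herman--Yoccoz to identify it with $\mathcal{F}_\alpha^\infty$; the Diophantine condition is what makes the tame inverse-function step go through.

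For (iii), you write $g_i=h_i^{-1}\circ R_\alpha\circ h_i$ and set $u_i:=h_i^{-1}$, but the proposition is needed (and used in the proof of Theorem~\ref{M1}) on $\mathcal{F}_\alpha^0$, where only a Poincar\'e \emph{semi}-conjugacy $h_i$ is available in general. In the Denjoy situation $h_i$ is not injective, $h_i^{-1}$ is not a function, and your displayed identity together with the ``strict monotonicity of $u_2$'' step collapses. A more robust route avoids conjugacy altogether: from $\Phi(g_1)=\Phi(g_2)$ one gets two invariant Lipschitz graphs of the \emph{same} twist map $F_\beta^\phi$ with the same irrational rotation number, and one argues directly (monotone twist plus the order structure of orbits, or Aubry--Mather uniqueness of the minimal set) that two such graphs coincide.
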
	
		Let
		\[
		U^r_\delta := \left\{ \phi \in C^\infty(\mathbb{T})\ \middle|\ \|\phi\|_{C^{r}} < \delta \right\}.
		\]
		By \cite[Corollary 7.10]{H33}, we know that for each $\alpha \in \mathcal{C}$, there exists $\delta > 0$ such that
		\begin{equation}\label{hkam}
			U_\delta^3 \subseteq \Phi(\mathcal{F}_\alpha^\infty).
		\end{equation}
		On the other hand, Herman \cite[Theorem 3.3]{H1} constructed a $C^\infty$ perturbation $\phi$ that is arbitrarily small in the $C^{2 - \varepsilon}$ topology, such that all invariant graphs of the  twist map $F^\phi$ are destroyed. Let
		\[
		\mathcal{F}^0 := \bigcup_{\alpha \in \mathbb{R}} \mathcal{F}_\alpha^0.
		\]
		Then Herman's result can be reformulated as follows: for each $\delta > 0$, we have
		\begin{equation}\label{hckam1}
			(U_\delta^{2 - \varepsilon} \setminus U_\delta^2) \cap \Phi(\mathcal{F}^0) \subsetneqq U_\delta^{2 - \varepsilon} \setminus U_\delta^2.
		\end{equation}
		%To the best of our knowledge, there is no result in the literature proving that
%		\[
%		(U_\delta^{2 - \varepsilon} \setminus U_\delta^2) \cap \Phi(\mathcal{F}^0) \neq \emptyset.
%		\]
		A natural question concerns the structure of the set $(U_\delta^{2 - \varepsilon} \setminus U_\delta^2) \cap \Phi(\mathcal{F}^0)$. In \cite[Chapter III]{H1}, Herman proved  that
\begin{Proposition}\label{Hpiii}
\
		\begin{itemize}
			\item the set $\Phi(\mathcal{F}^0)\cap U_\delta^{1}$ is non-convex;
			\item for each irrational $\al$, the set $\Phi(\mathcal{F}^\infty_\al)\cap U_\delta^{2-\eps}$ is non-convex;
			\item for each irrational $\al$, the set $\Phi(\mathcal{F}^0_\al)\cap U_\delta^{1}$ is non star-shaped with respect to $\phi\equiv 0\in U_\delta^{1}$.
		\end{itemize}
\end{Proposition}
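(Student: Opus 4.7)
The plan is to exploit the intrinsic nonlinearity of the Herman operator
\[
\Phi(g) \;=\; \frac{g + g^{-1}}{2} - \mathrm{Id}.
\]
Because $g \mapsto g^{-1}$ is a nonlinear operation on $D^r(\mathbb{T})$, the image $\Phi(\mathcal{F}_\alpha^r)$ inherits no a priori convex or star-shaped structure, and each of the three items should be witnessed by an explicit counterexample together with a sharp structural invariant of the image of the map $h \mapsto h + h^{-1}$.

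For items (i) and (ii), I would construct two circle homeomorphisms $g_1, g_2$ with $\Phi(g_i) \in U_\delta^r$ and rule out the existence of any $g_3$ satisfying
\[
g_3 + g_3^{-1} \;=\; \tfrac{1}{2}\bigl(g_1 + g_1^{-1} + g_2 + g_2^{-1}\bigr),
\]
an identity equivalent, via the explicit formula, to $\tfrac12(\Phi(g_1)+\Phi(g_2)) = \Phi(g_3)$. In item (i), where $\mathcal{F}^0 = \bigcup_\alpha \mathcal{F}_\alpha^0$ permits varying rotation numbers, I would take $g_1, g_2$ to be piecewise-smooth perturbations of two carefully chosen rotations so that the averaged right-hand side develops a slope profile that cannot be matched by $h+h^{-1}$ for any orientation-preserving circle homeomorphism $h$; the key invariant is the pointwise relation between $(h+h^{-1})'$, $h'$, and $1/h'(h^{-1})$ afforded by AM--GM. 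In item (ii), the injectivity of $\Phi$ on $\mathcal{F}_\alpha^\infty$ from Proposition~\ref{Hp123}(iii) pins down a unique candidate $g_3$ modulo the $\beta$-parameter, and that candidate can be excluded by a direct $C^\infty$-regularity check or by comparing its forced rotation number with $\alpha$.

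For item (iii), I would fix an irrational $\alpha$ and choose $\phi = \Phi(g) \in U_\delta^1$ with $g \in \mathcal{F}_\alpha^0$. A hypothetical $g_t$ realizing $t\phi = \Phi(g_t)$ with $\rho(g_t) = \alpha$ must satisfy
\[
g_t + g_t^{-1} \;=\; 2(1-t)\,\mathrm{Id} \;+\; t\,(g + g^{-1}).
\]
At $t = 0$ the only solution with $\rho = \alpha$ is $R_\alpha$, while at $t=1$ it is $g$ itself. The plan is to select $g$ obtained from $R_\alpha$ by a sharply localized bi-Lipschitz conjugacy so that, for an explicit $t_0 \in (0,1)$, the resulting right-hand side presents a flat interval or an overly steep region that no increasing homeomorphism $h$ with $\rho(h) = \alpha$ can realize as $h+h^{-1}$; equivalently, the convex combination violates the AM--GM-type structural invariant on a set of positive measure, ruling out any $g_{t_0}$ at once.

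The main obstacle is excluding \emph{every} candidate $g_3$ (respectively $g_{t_0}$) of the above functional equations, rather than just a preferred one. Because solving $h+h^{-1} = \chi$ is essentially a square-root-type inversion problem on the group of circle homeomorphisms, the decisive analytic step is to distill a sharp intrinsic invariant of the image of $h \mapsto h + h^{-1}$---most plausibly the pointwise inequality obtained from AM--GM applied to $h'(x)$ and $1/h'(h^{-1}(x))$, coupled with a Birkhoff-average constraint encoding the rotation number---that is robustly destroyed under averaging or rescaling. Once such an invariant is in place, tailored choices of $g_1, g_2$ (or $g$ and $t_0$) yield all three assertions simultaneously.
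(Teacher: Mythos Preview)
The paper does not prove this proposition; it is quoted from Herman~\cite[Chapter~III]{H1} with no accompanying argument, so there is no in-paper proof to compare your proposal against.

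On its own terms, your outline correctly identifies the architecture --- build explicit $g$'s and show that no $h$ can solve $h+h^{-1}=\chi$ for the averaged or rescaled right-hand side $\chi$ --- but the specific obstruction you propose does not work. From
\[
(h+h^{-1})'(x)\;=\;h'(x)\;+\;\frac{1}{h'\bigl(h^{-1}(x)\bigr)}
\]
the only robust pointwise constraint is that $h+h^{-1}$ is strictly increasing, i.e.\ $\phi'>-1$ whenever $\phi=\Phi(h)$; AM--GM applied to the two summands yields only $\geq 2\sqrt{h'(x)/h'(h^{-1}(x))}$, which carries no fixed lower bound since the two arguments differ. Crucially, the condition $\phi'>-1$ is \emph{preserved} under convex combination and under scaling by $t\in[0,1]$ (if $\phi_1',\phi_2'>-1$ then $\tfrac12(\phi_1'+\phi_2')>-1$; if $\phi'>-1$ then $t\phi'>-t>-1$). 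Hence the ``flat interval'' you aim to produce in item~(iii) can never appear in $2(1-t)\,\mathrm{Id}+t(g+g^{-1})$, and no such local derivative invariant can separate $\Phi(\mathcal{F}^0)$ from its convex or star hull. Your use of injectivity in item~(ii) is also not decisive: injectivity of $\Phi|_{\mathcal{F}_\alpha^\infty}$ says nothing about \emph{existence} of a preimage for the midpoint, which is exactly what must be ruled out. What is actually needed --- and what Herman supplies in~\cite[Chapter~III]{H1} --- is a global obstruction, tied to the forced interlacing of the graphs of $h$ and $h^{-1}$ and (for fixed $\alpha$) to the rotation-number constraint, that is genuinely destroyed by the specific averaging or rescaling; your plan names the crux but does not furnish the tool.
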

		This suggests that the set of perturbations in $U_\delta^{r}$ (for $r<2$) that guarantee the existence of invariant graphs has a poor geometric structure, which poses further difficulties for addressing questions (I)-(IV).

			\vspace{1ex}
		\subsection{On Questions (I) and (II)}
		By using certain modifications of  the Arnold family (see Section~\ref{2.2}), we obtain the following result.

\begin{Theorem}\label{M0}
Fix \( 0 < \varepsilon \ll 1 \) and $\iota \in \mathbb{N}$ with $\iota \geq 2$. For any rotation number \( \alpha \in (0,1) \) and any \( \delta > 0 \),
\[
\left(U_\delta^{\iota - \varepsilon} \setminus U_\delta^\iota\right) \cap \Phi(\mathcal{F}_\alpha^\omega) \neq \emptyset.
\]
\end{Theorem}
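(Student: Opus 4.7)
The plan is to realize $\phi$ explicitly via Herman's identity \eqref{hrfo} by constructing an analytic circle diffeomorphism $g\in\mathcal{F}_\alpha^\omega$ for which $\Phi(g)=\tfrac{1}{2}(g+g^{-1}-2\,\mathrm{Id})$ has the required bilateral norm estimate. Specifically, I would use a single high-frequency modification of the Arnold family,
\[
g_\beta(x) \;:=\; x + \beta + \lambda \sin(2\pi Q x),\qquad 2\pi\lambda Q<1,
\]
for a large integer $Q$ and a small parameter $\lambda$, both to be chosen in terms of $\alpha,\iota,\varepsilon,\delta$. Since $g_\beta(x)-x\in[\beta-\lambda,\beta+\lambda]$, the rotation number $\rho(g_\beta)$ lies in the same interval; monotonicity and continuity of $\rho$ in $\beta$ yield some $\beta^\ast\in[\alpha-\lambda,\alpha+\lambda]$ with $\rho(g_{\beta^\ast})=\alpha$, and hence $g_{\beta^\ast}\in\mathcal{F}_\alpha^\omega$.

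The central computation is a first-order expansion of $g_{\beta^\ast}^{-1}$ in $\lambda$. Iterating the fixed-point relation $g^{-1}(x)=x-\beta^\ast-\lambda\sin(2\pi Q\, g^{-1}(x))$ and applying the chain rule in the regime $2\pi\lambda Q<1/2$ gives
\[
g_{\beta^\ast}^{-1}(x)=x-\beta^\ast-\lambda\sin\!\bigl(2\pi Q(x-\beta^\ast)\bigr)+R(x),\qquad \|R\|_{C^r}=O(\lambda^2 Q^{r+1}),
\]
so that
\[
\phi(x):=g_{\beta^\ast}(x)+g_{\beta^\ast}^{-1}(x)-2x=2\lambda\sin(\pi Q\beta^\ast)\,\cos\!\bigl(2\pi Q(x-\beta^\ast/2)\bigr)+R(x),
\]
whose leading term has $C^r$-norm $\asymp \lambda\,|\sin(\pi Q\beta^\ast)|\,Q^r$.

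To engineer the norm gap, pick large $Q\in\mathbb{N}$ via Weyl equidistribution so that $\|Q\alpha\|\geq 1/4$, and set $\lambda:=\delta\,Q^{-\iota+\varepsilon/2}$. The hypothesis $\iota\geq 2$ forces $\lambda Q=\delta\,Q^{-(\iota-1-\varepsilon/2)}\to 0$, which simultaneously (i) keeps $g_{\beta^\ast}$ an analytic diffeomorphism, (ii) gives $|\beta^\ast-\alpha|\leq\lambda$ and therefore $|\sin(\pi Q\beta^\ast)|\geq\tfrac{1}{2}|\sin(\pi Q\alpha)|$ for $Q$ large, and (iii) makes $\|R\|_{C^\iota}=O(\delta^2 Q^{1-\iota+\varepsilon})$ negligible compared to the leading $C^\iota$-term $\asymp\delta\,Q^{\varepsilon/2}$. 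Direct substitution then yields
\[
\|\phi\|_{C^{\iota-\varepsilon}}\lesssim \delta\,Q^{-\varepsilon/2},\qquad \|\phi\|_{C^{\iota}}\gtrsim \delta\,Q^{\varepsilon/2},
\]
so that for $Q$ sufficiently large, $\Phi(g_{\beta^\ast})=\phi/2\in U_\delta^{\iota-\varepsilon}\setminus U_\delta^\iota$, as required.

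The main technical obstacle is propagating the remainder estimate $\|R\|_{C^r}=O(\lambda^2 Q^{r+1})$ through $r=\iota$ derivatives: each application of the chain rule to the implicit definition of $g^{-1}$ introduces both a factor of $Q$ and an inverse factor of $g'=1+2\pi\lambda Q\cos(2\pi Q x)$. The condition $2\pi\lambda Q<1$ keeps $1/g'$ uniformly bounded, and a Fa\`a di Bruno bookkeeping then yields the stated bound. The sharp exponent comparison reducing to $\iota-1-\varepsilon/2>0$ is precisely what makes $\iota\geq 2$ the natural hypothesis and aligns the construction with Herman's threshold in \eqref{hckam1}.
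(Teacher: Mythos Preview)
Your approach is essentially the paper's own: both take the high-frequency Arnold family $g_\beta(x)=x+\beta+\lambda\sin(2\pi Qx)$ with $\lambda\asymp Q^{-\iota+\nu}$, expand $g_\beta^{-1}$ via Fa\`a di Bruno to isolate the leading term $2\lambda\sin(\pi Q\beta)\cos\bigl(2\pi Q(x-\beta/2)\bigr)$ of $\phi=g_\beta+g_\beta^{-1}-2\,\mathrm{Id}$, and read off the gap $\|\phi\|_{C^{\iota-\varepsilon}}\asymp\lambda Q^{\iota-\varepsilon}\to 0$ versus $\|\phi\|_{C^\iota}\asymp\lambda Q^{\iota}|\sin(\pi Q\beta)|\to\infty$. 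The only patch you need is that Weyl equidistribution does not apply to rational $\alpha$: the paper handles $\alpha=p/q$ separately by taking $Q\equiv k_\alpha\pmod q$ with $\|k_\alpha\alpha\|$ maximal (which is $\geq 1/3$ for $q\geq 2$), and your argument should do the same; your transfer of the lower bound from $|\sin(\pi Q\alpha)|$ to $|\sin(\pi Q\beta^\ast)|$ via $|\beta^\ast-\alpha|\leq\lambda$ and $Q\lambda\to 0$ is in fact a mild streamlining of the paper's Arnold-tongue argument for the same step.
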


\vspace{1ex}

 Theorem~\ref{M0} is proved by explicitly constructing an element in the corresponding set. More precisely, we show that there exists a perturbation \(\phi \in C^\omega(\mathbb{T})\) whose \(C^{\iota-\varepsilon}\)-norm is arbitrarily small but whose \(C^\iota\)-norm is large, such that for every rotation number \(\alpha\), one can find \(\beta\) (depending on \(\alpha\)) for which the twist map \(f_\beta^\phi\) defined by \eqref{Fpar} admits a $C^\omega$ invariant graph with rotation number $\alpha$.

	\vspace{1ex}

By \eqref{hrfo}, the circle map \(g_\beta\) corresponding to \(f_\beta^\phi\) is also real-analytic. According to the Denjoy-Herman-Yoccoz theory of circle maps \cite{De,H11,Y1,Y2,Y3}, we have the following:
\begin{itemize}
 \item If \(\alpha\) is rational, then \(g_\beta\) is not conjugate to \(R_\alpha\).
    \item If \(\alpha\) is irrational, then \(g_\beta\) is at least \(C^0\)-conjugate to \(R_\alpha\).
    \item If \(\alpha \in \mathcal{H}\), then \(g_\beta\) is \(C^\omega\)-conjugate to \(R_\alpha\) (i.e., \(C^\omega\)-linearizable); see Remark~\ref{hbdd} for further details on the set \(\mathcal{H}\).
\end{itemize}

\begin{Remark}\label{hbdd}
The set \(\mathcal{H}\) is larger than the Diophantine numbers but smaller than the Brjuno numbers; see \cite{Y3} for its precise definition. In terms of the continued fraction expansion, let \(\{p_n / q_n\}_{n \in \mathbb{N}}\) be the sequence of convergents of \(\alpha\). Then \(\alpha \in \mathcal{H}\) if there exists \(\mu > 0\) such that
\[
\ln(q_{n+1}) \leq C (\ln q_n)^\mu \quad \text{for all } n.
\]
If \(\{a_n\}\) are the partial quotients in the continued fraction expansion of \(\alpha\), then any \(\alpha\) satisfying
\[
e^{(a_n)^\mu} \leq a_{n+1} \leq e^{a_n} \quad \text{for all } n,
\]
for some \(0 < \mu < 1\), is a Brjuno number but does not belong to \(\mathcal{H}\).
\end{Remark}
For \(\alpha \notin \mathcal{H}\), Yoccoz constructed examples of real-analytic diffeomorphisms with rotation number \(\alpha\) that are not analytically linearizable. Theorem~\ref{M0} implies that under perturbations satisfying \(\phi \in U_\delta^{\iota - \varepsilon} \setminus U_\delta^\iota\), invariant graphs with rotation number \(\alpha \notin \mathcal{H}\) can still be preserved. This leads to a natural question: how pathological is the dynamics on these preserved invariant graphs? Building on Herman's work, we provide a partial answer in the generic case.

We now recall the notion of singular conjugation.
\begin{Definition}
Let \( f \in D^1(\mathbb{T}) \) with \( \rho(f) = \alpha \in \mathbb{R} \setminus \mathbb{Q} \) such that \( f \) is \( C^0 \)-conjugate to \( R_\alpha \); write \( f = h^{-1} \circ R_\alpha \circ h \). We say that \( f \) is singular conjugate to \( R_\alpha \) if \( h \) is not absolutely continuous (i.e., there exists a Borel set \( \mathcal{B} \) such that \( m(\mathcal{B}) = 0 \) but \( m(h(\mathcal{B})) \neq 0 \)).
\end{Definition}

Define \( C^{a.c.}(\mathbb{T}) \) as the space of absolutely continuous, \(\mathbb{Z}\)-periodic functions.
Define the space
\[
\mathcal{O}_\alpha^{k,r} := \mathcal{O}_\alpha^r \cap \mathcal{F}_\alpha^k.
\]

\begin{Theorem}\label{M01}
Fix \( 0 < \varepsilon \ll 1 \) and $\iota \in \mathbb{N}$ with $\iota \geq 2$. There exists a residual set \( \mathcal{R} \subset [0,1] \) such that for every \( \alpha \in \mathcal{R} \) and every \( \delta > 0 \),
\[
\left(U_\delta^{\iota - \varepsilon} \setminus U_\delta^\iota\right) \cap \left(\Phi(\mathcal{O}_\alpha^{\omega,0}) \setminus \Phi(\mathcal{O}_\alpha^{\omega,\mathrm{a.c.}})\right) \neq \emptyset.
\]
\end{Theorem}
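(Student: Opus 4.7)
The plan is to combine Theorem \ref{M0} with a Baire-category argument on the rotation number, exploiting the family structure afforded by Herman's identity \eqref{hrfo}.

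First, I would apply Theorem \ref{M0} to fix a single $\phi \in C^\omega(\mathbb{T}) \cap (U_\delta^{\iota-\varepsilon} \setminus U_\delta^\iota)$ such that, for every $\alpha \in (0,1)$, there is $\beta(\alpha) \in \mathbb{R}$ for which $f_{\beta(\alpha)}^\phi$ admits a $C^\omega$ invariant graph of rotation number $\alpha$. By \eqref{hrfo} and the remark following it, the whole family $\{g_\beta\}_{\beta \in \mathbb{R}} \subset D^\omega(\mathbb{T})$ associated with $f_\beta^\phi$ satisfies $\Phi(g_\beta) \equiv \phi$, while $\beta \mapsto \rho(g_\beta)$ is a continuous, non-decreasing surjection. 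For irrational $\alpha$, $g_{\beta(\alpha)}$ is $C^0$-conjugate to $R_\alpha$ by Denjoy's theorem, so $\phi \in \Phi(\mathcal{O}_\alpha^{\omega,0})$; by the injectivity of $\Phi$ on $\mathcal{F}_\alpha^0$ (Proposition \ref{Hp123}(iii)), this $g_{\beta(\alpha)}$ is the unique element of $\mathcal{F}_\alpha^0$ mapped to $\phi$. Hence $\phi \in \Phi(\mathcal{O}_\alpha^{\omega,\mathrm{a.c.}})$ if and only if the conjugating $h_\alpha$ in $g_{\beta(\alpha)} = h_\alpha^{-1} \circ R_\alpha \circ h_\alpha$ is absolutely continuous, equivalently, the unique $g_{\beta(\alpha)}$-invariant probability measure $\mu_\alpha$ is absolutely continuous with respect to the Lebesgue measure $m$.

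It therefore suffices to prove that
\[
\mathcal{R} := \bigl\{\alpha \in (0,1)\setminus\mathbb{Q} : \mu_\alpha \perp m\bigr\}
\]
is residual in $[0,1]$. Writing
\[
\mathcal{R} \supseteq \bigcap_{n \ge 1}\bigl\{\alpha : \exists\ \text{open}\ U \subset \mathbb{T},\ m(U) < 1/n,\ \mu_\alpha(U) > 1 - 1/n\bigr\},
\]
each inner set is open because $\alpha \mapsto \mu_\alpha$ is weak-$\ast$ continuous on irrationals, which in turn follows from continuity of $\beta(\alpha)$ on irrationals (the real-analytic, twist setting ensures that Arnold tongues over irrationals are degenerate, cf.\ \cite[Chapter IV]{H1}). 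For density in every open subinterval of $(0,1)$, the plan is to exhibit a very Liouville $\alpha$ along which the renormalization of $g_{\beta(\alpha)}$ at closest-return times $q_k(\alpha)$ inherits enough distortion from $\phi \notin U_\delta^\iota$ to force $\mu_\alpha$ to concentrate on a set of small Lebesgue measure. Here one would invoke a Herman-type singular-conjugacy criterion (cf.\ \cite[Chapters X--XIII]{H1}): if $h_\alpha$ were absolutely continuous, its derivative $h_\alpha'$ would solve a cohomological equation driven by $\phi$, and for $\alpha$ whose partial quotients $q_{k+1}(\alpha)$ grow fast enough relative to the scale of $\phi$, this yields a Fourier-side incompatibility ruling out $1/h_\alpha' \in L^1$. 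Since the set of such super-Liouville $\alpha$ is a standard $G_\delta$-dense subset of $[0,1]$, density of each layer follows, giving the residuality of $\mathcal{R}$.

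The hard part will be the quantitative passage from the fact that $\phi$ is $C^\iota$-large but $C^\omega$ to a Denjoy--Koksma-type lower bound precluding an absolutely continuous invariant density of $g_{\beta(\alpha)}$ for generic Liouville $\alpha$: one needs to control the Birkhoff sums of $\log g_{\beta(\alpha)}'$ along closest returns and show that they are incompatible with $\log(1/h_\alpha') \in L^1$ when $q_{k+1}$ grows super-polynomially in $q_k$. I expect this to be handled by exploiting the explicit modified-Arnold-family structure used in the proof of Theorem \ref{M0}, so that the sharp $C^\iota$-size of $\phi$ directly controls the geometric distortion rates driving the relevant Koksma estimate.
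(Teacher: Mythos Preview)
Your approach diverges substantially from the paper's and contains a genuine gap. The paper does \emph{not} attempt to establish singular conjugacy from scratch via Denjoy--Koksma estimates or a super-Liouville construction. Instead it invokes two off-the-shelf results of Herman about the Arnold family. First (Lemma~\ref{Yha}, i.e.\ \cite[p.~169, Prop.~1.13]{H11}): for fixed $0<\sigma<1$, the set of parameters $\lambda\in K_\sigma$ for which $f_{\lambda,\sigma}$ is singular-conjugate to its rotation is residual in $K_\sigma$; Herman encodes this via the upper-semicontinuous functional $\mathcal{N}(f)=\inf_{n}\delta(\tfrac{1}{n}\sum_{i<n}Df^i,0)$, with $G_\sigma=\mathcal{N}^{-1}(0)\cap(K_\sigma\setminus D_\sigma)$. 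Second (Lemma~\ref{Yhamdx}, i.e.\ \cite[p.~38, Prop.~5.3(b)]{H11}): the rotation-number map sends residual subsets of $K_\sigma$ to residual subsets of $[0,1]$. Applying these to the modified family $f_{\beta,N}$ at each fixed $N\ge N_0$ yields a residual set $\tfrac{1}{N}h_N(\mathcal{G}_N)\subset[0,1]$ of rotation numbers realized by singular-conjugate maps; setting $\mathcal{R}:=\bigcap_{N\ge N_0}\tfrac{1}{N}h_N(\mathcal{G}_N)$ then gives, for every $\alpha\in\mathcal{R}$ and every $N$ (hence every $\delta$), a parameter $\beta$ with $h_N(\beta)=\alpha$ and singular conjugacy. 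The norm estimates on the associated $\phi$ are inherited from the proof of Theorem~\ref{M0}.

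Your plan has two concrete difficulties. (i) You assume a single fixed $\phi$ yields a $C^\omega$ family $\{g_\beta\}_\beta$ with $\Phi(g_\beta)\equiv\phi$ and $\rho$ surjective. But Lemma~\ref{herob} only says that \emph{if} $F_\beta^\phi$ has an invariant graph then such a $g_\beta$ exists; nothing in the proof of Theorem~\ref{M0} guarantees that for one fixed $\phi$ the equation $g+g^{-1}=2\,\mathrm{Id}+\phi$ admits $C^\omega$ solutions of \emph{every} rotation number. In the actual construction the circle map is the Arnold family $g_{\beta,n}$ and the perturbation $\phi_{\beta,n}=g_{\beta,n}+g_{\beta,n}^{-1}-2\,\mathrm{Id}$ genuinely varies with $\beta$, with uniform control only on the norms. (ii) Even granting the family, your density step---forcing singular conjugacy at super-Liouville $\alpha$ via a ``Fourier-side incompatibility'' coming from $\|\phi\|_{C^\iota}$---is speculative: the $C^\iota$-largeness of $\phi$ reflects one high-frequency Fourier mode and does not by itself obstruct $h_\alpha'\in L^1$, and carrying this out would amount to reproving Herman's genericity theorem for the Arnold family. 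The paper sidesteps all of this by quoting that theorem directly and then transporting residuality from parameter space to rotation-number space.
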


\vspace{1ex}

 By the Baire category theorem, the set \(\mathcal{R}\) is dense in \([0,1]\). To prove Theorem~\ref{M01}, we construct a perturbation \(\phi \in C^\omega(\mathbb{T})\) with the required properties such that for every rotation number \(\alpha \in \mathcal{R}\), there exists \(\beta\) (depending on \(\alpha\)) for which the twist map \(f_\beta^\phi\) defined by \eqref{Fpar} admits a \(C^\omega\) invariant graph on which the dynamics is singular conjugate to the rigid rotation \(R_\alpha\).

 \begin{itemize}
\item For $\iota = 2$, Theorem~\ref{M0} implies that the preimage under $\Phi$ of the set $(U_\delta^{2 - \varepsilon} \setminus U_\delta^2) \cap \Phi(\mathcal{F}^0)$ can contain invariant graphs with arbitrary rotation numbers, whether rational or irrational. (Note that if $\alpha$ is rational, the restriction of $\Phi$ to $\mathcal{F}^0_{\alpha}$ is not necessarily invertible.)

\item For $\iota = 3$, Theorems~\ref{M0} and~\ref{M01} provide answers to Questions (I) and (II) above. Specifically, there are no restrictions on the rotation numbers of the invariant graphs, and the dynamics on these graphs exhibit diversity related to the arithmetic properties of the rotation numbers.
 \end{itemize}

\begin{Remark}
According to \cite{Y3}, the set $\mathcal{H}$ is of type $F_{\sigma\delta}$ but not $F_\sigma$, which implies that
\[
\mathcal{Y} := [0,1] \setminus (\mathbb{Q} \cup \mathcal{H} \cup \mathcal{R}) \neq \emptyset.
\]
For $\alpha \in \mathcal{Y}$, we only know that there exists $\beta$ (depending on $\alpha$) such that the twist map $f_\beta^\phi$ defined by \eqref{Fpar} admits a $C^\omega$ invariant graph on which the dynamics is not $C^\omega$-conjugate to the rigid rotation $R_\alpha$, while it remains undetermined whether the conjugacy is singular.
\end{Remark}

\begin{Remark}
Based on Herman's formula \eqref{hrfo}, we can transform the construction of twist maps into the construction of circle maps. For certain Liouville rotation numbers, examples of $C^\omega$ circle maps that are singularly conjugate to rigid rotations are known (see, e.g., \cite[Theorem 12.5.1]{KH}). However, to the best of our knowledge, the perturbation in these known examples has large $C^0$-norm. Alternatively, using the Anosov-Katok method (i.e., approximation by conjugation), one can construct $C^\infty$ circle maps that are singularly conjugate to rigid rotations for certain Liouville numbers, with perturbations arbitrarily small in the $C^\infty$ topology (see, e.g., \cite[Theorem 12.6.1]{KH}). Nevertheless, this method is problematic to apply to $C^\omega$ constructions.
\end{Remark}

In the proofs of Theorems~\ref{M0} and~\ref{M01}, the constructions based on the modified Arnold family yield invariant graphs that are themselves $C^\omega$. In fact, for the construction given by Herman's formula \eqref{hrfo}, the regularity of the invariant graph coincides with that of the circle map $g$. The existence and breakdown of invariant graphs with lower regularity (e.g., non-$C^\infty$) have attracted considerable interest, corresponding precisely to Questions (III) and (IV). In the next section, we address these two questions.

	\vspace{1ex}

		\subsection{{On Questions (III) and (IV)}}
		These questions are related to two problems posed by Mather in 1998. Specifically, Mather asked the following questions (\cite[Page 181-182]{MY}):

		\vspace{1ex}
		
		\begin{itemize}
			\item \textbf{Question 1:} {\it Given a \( C^\infty \) twist diffeomorphism and an invariant graph which is not \( C^\infty \), is it possible to destroy it by an arbitrarily small \( C^\infty \) perturbation?}
		\end{itemize}
		
		\vspace{1ex}

		\begin{itemize}
			\item \textbf{Question 2:} {\it Does there exist an example of a $C^r$  area-preserving twist map with an invariant graph which is not $C^1$ and that contains no periodic point? (separate the
				question for each $r\in [1,+\infty]\cup \{\omega\}$)}
		\end{itemize}
		
		\vspace{1ex}

		\subsubsection{A Positive Evidence for Question 1}
		
		The first question concerns the $C^\infty$ destruction of invariant graphs. Mather~\cite{M4} showed that any invariant graph with a Liouville rotation number can be destroyed by arbitrarily small \( C^\infty \) perturbations. Later, Forni~\cite{F} proved that even real-analytic (\( C^\omega \)) perturbations can destroy such graphs for a certain subclass of frequencies (a proper subset of non-Brjuno numbers). Here, we provide a positive evidence to this question for invariant graphs with Diophantine rotation numbers.
		
		\begin{Theorem}\label{M1}
			Given \( 0 < \varepsilon \ll 1 \) and a \((D, \tau)\)-Diophantine number $\alpha$, there exists a sequence of \( C^\infty \) perturbations \( \{\phi_n\}_{n \in \mathbb{N}} \) such that
			\[
			\|\phi_n\|_{C^{3+\tau-\varepsilon}} \to 0, \quad \text{but} \quad \|\phi_n\|_{C^{3+\tau+\varepsilon}} \nrightarrow 0,
			\]
			and the following properties hold:
			\begin{itemize}
				\item The perturbed map \( F^{\phi_n} \) admits an invariant graph \( {\Gamma}_n \) with rotation number \( \alpha \);
				\item The graph \( {\Gamma}_n \) is not of class \( C^\infty \), and it can be destroyed by an arbitrarily small \( C^\infty \) perturbation.
			\end{itemize}
		\end{Theorem}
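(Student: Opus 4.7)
The plan is to construct $\phi_n$ as explicit frequency-localized perturbations calibrated against the convergents $p_k/q_k$ of $\alpha$, to realize the invariant graph via Herman's relation (\ref{hrfo}), and to destroy it by adapting Mather's variational argument.

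First, picking a subsequence $\{q_{k_n}\}$ with $q_{k_n}\to\infty$, I would set
$$
\phi_n(x) := q_{k_n}^{-(3+\tau)}\,\chi(q_{k_n} x),
$$
where $\chi$ is a fixed $C^\infty$ zero-mean function on $\mathbb{T}$. The scaling $\|\chi(q_{k_n}\cdot)\|_{C^s}\asymp q_{k_n}^{s}$ gives
$$
\|\phi_n\|_{C^{3+\tau-\varepsilon}}\asymp q_{k_n}^{-\varepsilon}\to 0,\qquad \|\phi_n\|_{C^{3+\tau+\varepsilon}}\asymp q_{k_n}^{\varepsilon}\not\to 0,
$$
so that $\phi_n$ straddles the Moser--R\"{u}ssmann KAM threshold in $C^{3+\tau}$. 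By adjusting $\beta=\beta_n$ in the corresponding Arnold tongue (the same tongue technique used for Theorem~\ref{M0}), the family $F_{\beta_n}^{\phi_n}$ carries an Aubry--Mather minimal set of rotation number $\alpha$ which is a Lipschitz invariant graph $\Gamma_n=\{(x,\Psi_n(x))\}$, and its associated circle diffeomorphism $g_n = \mathrm{Id}+\beta_n+\Psi_n+\phi_n$ satisfies (\ref{hrfo}).

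Next, I would verify $\Gamma_n\notin C^\infty$ by a resonance argument: the Fourier content of $\phi_n$ concentrated at frequency $q_{k_n}$, combined with the Diophantine lower bound $\|q_{k_n}\alpha\|\geq D\,q_{k_n}^{-(1+\tau)}$ inverted through the linearization of (\ref{hrfo}), forces the Fourier amplitude of $g_n$ at frequency $q_{k_n}$ not to decay faster than a prescribed polynomial rate. Repeating this along the subsequence $\{q_{k_n}\}$ places a finite ceiling on the regularity of $g_n$, hence of $\Psi_n=g_n-\mathrm{Id}-\beta_n-\phi_n$; in particular, $\Psi_n\notin C^\infty$ while $\phi_n\in C^\infty$.

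Finally --- and this is where I expect the main obstacle --- I would adapt Mather's variational destruction mechanism \cite{M4}. The failure of $\Gamma_n$ to be $C^\infty$ exhibits a point $x_*\in\mathbb{T}$ at which the generating function of $\Gamma_n$ has insufficient regularity; at this $x_*$, a suitably aligned $C^\infty$-small bump $\psi$ can be inserted so that the action associated to $F^{\phi_n+\psi}$ has no $\alpha$-minimizer. By the Birkhoff graph theorem together with the variational characterization of invariant graphs, $F^{\phi_n+\psi}$ therefore carries no invariant graph of rotation number $\alpha$. The hard part is to quantify the non-smoothness of $\Gamma_n$ at $x_*$ in terms of the gap between the $C^{3+\tau\pm\varepsilon}$ norms of $\phi_n$, and to exhibit $\psi$ that is simultaneously small in every $C^k$-norm --- this substitutes for the arithmetic control of Liouville rotation numbers used by Mather and is the genuinely new ingredient needed to make the destruction work in the Diophantine regime.
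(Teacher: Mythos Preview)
Your approach is constructive, but the paper's proof is soft and topological, and this matters because each of your three steps has a gap that the paper's argument sidesteps. The paper works inside the open connected set $U_\delta:=U_\delta^{3+\tau-\varepsilon}\subset H^\infty$, sets $A_\delta:=\Phi(\mathcal{F}_\alpha^0)\cap U_\delta$ and $W_\delta:=\Phi(\mathcal{F}_\alpha^\infty)\cap U_\delta$, and picks $\phi_n\in\partial\bar A_{1/n}$. By Proposition~\ref{Hp123}(i) the set $W_\delta$ is open, by (ii) one has $\bar A_\delta\subset\Phi(\mathcal{F}_\alpha^0)\cap U_\delta$, and Herman's destruction result gives $\bar A_\delta\subsetneq U_\delta$, so connectedness of $U_\delta$ forces $\partial\bar A_\delta\neq\emptyset$. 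Any $\phi\in\partial\bar A_\delta$ then automatically enjoys all three conclusions: the invariant graph exists since $\phi\in\bar A_\delta\subset\Phi(\mathcal{F}_\alpha^0)$; it is not $C^\infty$ since otherwise $\phi\in W_\delta\subset\mathrm{int}(\bar A_\delta)$; and it can be destroyed by arbitrarily $C^\infty$-small perturbations since every neighbourhood of a boundary point meets $U_\delta\setminus A_\delta$. No explicit $\phi_n$, no Fourier analysis, and no variational destruction argument are needed.

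Your explicit route has a genuine gap at each step. For existence, Aubry--Mather theory yields a minimal set, not a graph; deciding when it is a full circle is precisely the hard question here, and the Arnold-tongue device of Theorem~\ref{M0} runs in the opposite direction---one prescribes a circle map $g$ and \emph{defines} $\phi=g+g^{-1}-2\,\mathrm{Id}$, so the graph comes for free---hence it cannot be invoked starting from a prescribed $\phi_n$. For non-smoothness, your resonance argument fails as written: for fixed $n$ your $\phi_n$ lives at the single scale $q_{k_n}$, and a lower bound on one Fourier coefficient of $g_n$ is perfectly compatible with $g_n\in C^\infty$ (even $C^\omega$); ``repeating along the subsequence'' changes $n$ and hence the map, so it constrains no individual $g_n$. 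For destruction, the bump mechanism you sketch is not known to work for Diophantine $\alpha$---Mather's argument in \cite{M4} uses Liouville arithmetic in an essential way---and you rightly flag this as the main obstacle; the paper avoids it entirely because membership in $\partial\bar A_\delta$ \emph{is} the destruction statement.
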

		
		\vspace{1ex}
		\begin{Remark}
			Theorem~\ref{M1} only offers a partial resolution to Mather's question, demonstrating the existence of a class of \( C^\infty \) twist diffeomorphisms for which certain non-\( C^\infty \) invariant graphs are not stable under arbitrarily small \( C^\infty \) perturbations. Whether this phenomenon holds for all such diffeomorphisms remains an open question.
		\end{Remark}
		
		\subsubsection{Partial Support for Question 2}
		
		The second question concerns the minimal attainable regularity of invariant graphs in higher regularity systems. Recent contributions to this problem appear in~\cite{Arna2,Arna14,AF}. Arnaud, in~\cite{Arna2,Arna14}, constructed explicit $C^1$ and $C^2$ maps possessing invariant graphs that include non-differentiable points, with the minimal invariant set forming a Cantor set. In a different direction, Avila and Fayad exhibited a $C^1$ map for which the minimal invariant set is the entire circle. The  question of whether invariant graphs containing non-differentiable points can arise in maps of higher regularity was subsequently emphasized by Fayad and Krikorian at the ICM 2018 (see~\cite[Question 26]{FKO}). Inspired by Mather's problem from 1998, one can pose the following variant:
		
		\vspace{1ex}
		\begin{itemize}
			\item \textbf{Question 2':} \textit{Given a \( C^\infty \) area-preserving twist map that admits an invariant curve with an irrational rotation number, which is represented as the graph of a function \( \psi \), what is the {minimal regularity} of \( \psi \)?}
		\end{itemize}
		
		According to \cite{Sa}, if an invariant graph with constant type rotation number is of class \( C^r \) with \( r > 4 \), then it must be \( C^\infty \). In \cite{KO}, Katznelson and Ornstein showed that if \( f \) is a \( C^{3+\gamma} \) area-preserving surface diffeomorphism that admits a \( C^{2+\varepsilon} \) invariant graph \( \psi \) with a ``good'' rotation number, then \( \psi \) is in fact \( C^{2+\gamma'} \) for all \( \gamma' < \gamma \). Here, a ``good'' rotation number \( \alpha \) refers to one whose continued fraction expansion \( [a_1, a_2, \ldots] \) satisfies \( a_n = \mathcal{O}(n^2) \), which is a broader condition than being of constant type.
		
		Combining these results, (\ref{hkam}) and  Theorem \ref{M1}, we obtain the following:
		
		\begin{Theorem}\label{neev}
			Given \( 0 < \varepsilon \ll 1 \) and a constant type number \( \alpha \), there exists a sequence of \( C^\infty \) perturbations \( \{\phi_n\}_{n \in \mathbb{N}} \) such that
			\[
			\|\phi_n\|_{C^{3-\varepsilon}} \to 0, \quad \text{but} \quad \|\phi_n\|_{C^{3}} \nrightarrow 0,
			\]
			and the following holds:
			\begin{itemize}
				\item The perturbed map \( F^{\phi_n} \) admits an invariant graph \( {\Gamma}_n \) with rotation number \( \alpha \);
				\item The graph \( {\Gamma}_n \) is not of class \( C^{2+\varepsilon} \) for any \( \varepsilon > 0 \).
			\end{itemize}
		\end{Theorem}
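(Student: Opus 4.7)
The plan is to derive Theorem \ref{neev} by assembling Theorem \ref{M1} specialized to $\tau = 0$ (which is exactly the constant type case), the open inclusion \eqref{hkam}, the injectivity of $\Phi$ on $\mathcal{F}_\alpha^0$ from Proposition \ref{Hp123}(iii), Herman's formula \eqref{hrfo}, and the Katznelson--Ornstein regularity theorem of \cite{KO}. A constant type number has uniformly bounded partial quotients, so it qualifies as a ``good'' rotation number in the sense of \cite{KO}. The desired sequence $\{\phi_n\}$ will simply be the one furnished by Theorem \ref{M1} with $\tau = 0$: this immediately yields $\|\phi_n\|_{C^{3-\varepsilon}} \to 0$, invariant graphs $\Gamma_n$ of rotation number $\alpha$ for $F^{\phi_n}$, and the qualitative fact that each $\Gamma_n$ fails to be of class $C^\infty$.

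To strengthen the lower bound from $\|\phi_n\|_{C^{3+\varepsilon}} \nrightarrow 0$ (given by Theorem \ref{M1}) to the sharper $\|\phi_n\|_{C^{3}} \nrightarrow 0$ asserted in the theorem, I would argue by contradiction. If $\|\phi_n\|_{C^3} \to 0$, then for all sufficiently large $n$ one has $\phi_n \in U_\delta^3$, and \eqref{hkam} produces some $g_n \in \mathcal{F}_\alpha^\infty$ with $\Phi(g_n) = \phi_n$. On the other hand, Herman's formula \eqref{hrfo} associates to the existing invariant graph $\Gamma_n$ an element of $\mathcal{F}_\alpha^0$ mapping to $\phi_n$ under $\Phi$, and the injectivity of $\Phi$ (Proposition \ref{Hp123}(iii)) forces the two elements to coincide. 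Consequently, the graph function $\Psi_n = g_n - \mathrm{Id} - \beta - \phi_n$ is $C^\infty$, contradicting the non-smoothness clause of Theorem \ref{M1}. Hence, perhaps after passing to a subsequence, $\|\phi_n\|_{C^{3}} \nrightarrow 0$.

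For the regularity upgrade from ``not $C^\infty$'' to ``not $C^{2+\varepsilon}$ for any $\varepsilon > 0$'', I would run a bootstrap based on \cite{KO}. Suppose for contradiction that $\Gamma_n \in C^{2+\varepsilon_0}$ for some $n$ and some $\varepsilon_0 > 0$. Because $F^{\phi_n}$ is itself $C^\infty$, the hypothesis of the Katznelson--Ornstein theorem is satisfied for every $\gamma > 0$, and it yields $\Gamma_n \in C^{2+\gamma'}$ for all $\gamma' < \gamma$; letting $\gamma \to \infty$ then forces $\Gamma_n \in C^\infty$, once more contradicting Theorem \ref{M1}. The only genuine structural step in the whole argument is the matching in the second paragraph: one must invoke the injectivity of $\Phi$ to identify the circle diffeomorphism produced by \eqref{hkam} with that encoding $\Gamma_n$ via \eqref{hrfo}. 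Everything else is a short chain of implications that packages the already-established results into the stated conclusion.
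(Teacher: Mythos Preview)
Your proposal is correct and matches the paper's own (very terse) argument, which simply says Theorem \ref{neev} follows by combining Theorem \ref{M1} with $\tau=0$, the inclusion \eqref{hkam}, and the regularity results of \cite{KO} and \cite{Sa}. Your write-up supplies exactly the details the paper omits: the contradiction via \eqref{hkam} and injectivity of $\Phi$ to upgrade $\|\phi_n\|_{C^{3+\varepsilon}}\nrightarrow 0$ to $\|\phi_n\|_{C^{3}}\nrightarrow 0$, and the Katznelson--Ornstein bootstrap to upgrade ``not $C^\infty$'' to ``not $C^{2+\varepsilon}$''; the phrase ``perhaps after passing to a subsequence'' is unnecessary (the negation of convergence to $0$ is already what you want), and you could optionally cite \cite{Sa} to pass from $C^{r}$ with $r>4$ to $C^\infty$, though iterating \cite{KO} over all $\gamma$ as you do already yields this.
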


\vspace{1ex}

		In light of Theorems~\ref{M0} and~\ref{M01}, Theorem~\ref{neev} can be reformulated as follows: for any $\nu > 0$,
\begin{equation}\label{t4re}
\left(U_\delta^{3 - \varepsilon} \setminus U_\delta^3\right) \cap \left(\Phi(\mathcal{F}_\alpha^{0}) \setminus \Phi(\mathcal{F}_\alpha^{2+\nu})\right) \neq \emptyset.
\end{equation}
		\begin{Remark}\label{rearre}
			Whether Theorem~\ref{neev} remains valid under real-analytic (\( C^\omega \)) perturbations is still unknown. The major difficulty lies in the fact that \( C^\omega(\mathbb{T}) \) is not a Fr\'{e}chet space, preventing the direct application of the inverse function theorem (see \cite[Theorem 2.6.1]{H1}) to verify openness properties.
		\end{Remark}
		
		\begin{Remark}
			Numerical evidence provided by Olvera and Petrov~\cite{OP} suggests that for certain area-preserving twist maps (such as the standard map), the critical invariant circle (with rotation number equal to the golden mean \( (\sqrt{5}-1)/2 \)) is a \( C^{1+\eta} \) graph, with \( \eta \in (0.7, 1) \). The dynamics on this graph is conjugate to the rigid rotation by a \( C^{\eta'} \) function, where \( \eta' \in (0.7, 1) \). This numerical result underscores the subtlety in attempting to further lower the regularity threshold in Theorem~\ref{neev}.
		\end{Remark}
		
		\vspace{1ex}
		
		\subsection{More on the Set $U_\delta^{3-\eps} \setminus U_\delta^3$ }
Herman \cite[Theorem 4.9]{H1} proved that for each $\alpha \in \mathcal{C}$, there exists a $C^\infty$ perturbation $\phi$ that is arbitrarily small in the $C^{3-\varepsilon}$ topology such that the invariant graph with rotation number $\alpha$ of the corresponding twist map $F^\phi$ is destroyed. In the notation of \eqref{hckam1}, this result can be stated as: for each $\delta > 0$,
\begin{equation}\label{hckam2}
(U_\delta^{3 - \varepsilon} \setminus U_\delta^3) \cap \Phi(\mathcal{F}^0_\alpha) \subsetneqq U_\delta^{3 - \varepsilon} \setminus U_\delta^3.
\end{equation}
Combining the preceding results, we obtain the following:

\begin{Theorem}
Let $\alpha \in \mathcal{C}$. For each $\delta > 0$, the following strict inclusions hold:
\[
U_\delta^{3} \subsetneqq U_\delta^{3 - \varepsilon} \cap \Phi(\mathcal{O}_\alpha^\infty) \subsetneqq U_\delta^{3 - \varepsilon} \cap \Phi(\mathcal{F}_\alpha^0) \subsetneqq U_\delta^{3 - \varepsilon}.
\]
\end{Theorem}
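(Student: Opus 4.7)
The plan is to decompose the statement into the three strict inclusions and verify each one by assembling results already established earlier in the paper; no new analytic construction is needed. The main inputs are Theorem~\ref{M0} (with $\iota=3$), Theorem~\ref{neev} (equivalently \eqref{t4re}), the KAM estimate \eqref{hkam}, Herman's destruction result \eqref{hckam2}, and the Herman--Yoccoz identity $\mathcal{F}_\alpha^\infty=\mathcal{O}_\alpha^\infty$, which is available whenever $\alpha$ is Diophantine and in particular when $\alpha\in\mathcal{C}$.

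For the leftmost inclusion $U_\delta^{3}\subseteq U_\delta^{3-\varepsilon}\cap\Phi(\mathcal{O}_\alpha^\infty)$, I would first observe that $U_\delta^{3}\subseteq U_\delta^{3-\varepsilon}$ reduces to the elementary interpolation $\|\phi\|_{C^{3-\varepsilon}}\le \|\phi\|_{C^{3}}$ on $\mathbb{T}$ (the $(1-\varepsilon)$-H\"older seminorm of $D^2\phi$ is controlled by $\|D^3\phi\|_\infty$ because the flat torus has bounded diameter), then combine \eqref{hkam} with $\mathcal{F}_\alpha^\infty=\mathcal{O}_\alpha^\infty$ to upgrade $U_\delta^{3}\subseteq\Phi(\mathcal{F}_\alpha^\infty)$ to $U_\delta^{3}\subseteq\Phi(\mathcal{O}_\alpha^\infty)$. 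For strictness I would invoke Theorem~\ref{M0} with $\iota=3$ applied to the given $\alpha\in\mathcal{C}$: it supplies some $\phi\in (U_\delta^{3-\varepsilon}\setminus U_\delta^{3})\cap\Phi(\mathcal{F}_\alpha^\omega)$, and the chain $\mathcal{F}_\alpha^\omega\subseteq\mathcal{F}_\alpha^\infty=\mathcal{O}_\alpha^\infty$ places this $\phi$ in $U_\delta^{3-\varepsilon}\cap\Phi(\mathcal{O}_\alpha^\infty)$ while keeping it outside $U_\delta^{3}$.

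For the middle inclusion, the non-strict direction is the trivial chain $\mathcal{O}_\alpha^\infty\subseteq\mathcal{F}_\alpha^\infty\subseteq\mathcal{F}_\alpha^{0}$, and strictness will follow from the reformulation \eqref{t4re} of Theorem~\ref{neev}. Fixing any $\nu>0$ it produces $\phi\in U_\delta^{3-\varepsilon}\cap\Phi(\mathcal{F}_\alpha^{0})$ lying outside $\Phi(\mathcal{F}_\alpha^{2+\nu})$; since $\mathcal{O}_\alpha^\infty\subseteq\mathcal{F}_\alpha^{2+\nu}$ forces $\Phi(\mathcal{O}_\alpha^\infty)\subseteq\Phi(\mathcal{F}_\alpha^{2+\nu})$, such a $\phi$ is automatically outside $\Phi(\mathcal{O}_\alpha^\infty)$. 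The rightmost inclusion $U_\delta^{3-\varepsilon}\cap\Phi(\mathcal{F}_\alpha^{0})\subseteq U_\delta^{3-\varepsilon}$ is trivial, and its strictness is exactly Herman's statement \eqref{hckam2}, which yields some $\phi\in U_\delta^{3-\varepsilon}\setminus\Phi(\mathcal{F}_\alpha^{0})$.

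I do not anticipate a substantive obstacle, since every step reduces to an invocation of a result already in hand. The only mildly cosmetic issue is making sure the norm convention gives $U_\delta^{3}\subseteq U_\delta^{3-\varepsilon}$ on the nose; any multiplicative constant arising from a less friendly normalization can be absorbed by shrinking $\delta$, which does not affect the conclusion since $\delta>0$ is arbitrary. The statement should therefore read as a clean synthesis: $U_\delta^{3}$ captures the classical KAM regime, Theorem~\ref{M0} extends $\Phi(\mathcal{O}_\alpha^\infty)$ strictly beyond it inside $U_\delta^{3-\varepsilon}$, Theorem~\ref{neev} adds invariant graphs of regularity strictly below $C^\infty$, and \eqref{hckam2} shows that even these together do not exhaust $U_\delta^{3-\varepsilon}$.
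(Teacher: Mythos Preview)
Your proposal is correct and follows exactly the same route as the paper: the paper's own proof is the single sentence ``The first strict inclusion follows from \eqref{hkam} and Theorem~\ref{M0}; the second from \eqref{t4re}; and the last from \eqref{hckam2},'' and you have simply unpacked each of these citations with the obvious auxiliary facts (the Herman--Yoccoz identity $\mathcal{F}_\alpha^\infty=\mathcal{O}_\alpha^\infty$ and the trivial inclusions among the $\mathcal{F}_\alpha^r$). There is nothing to add.
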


\vspace{1ex}

The first strict inclusion follows from \eqref{hkam} and Theorem~\ref{M0}; the second from (\ref{t4re}); and the last from \eqref{hckam2}. If the rotation number is not required to be of Diophantine type, then by Theorem~\ref{M01}, one obtains invariant graphs exhibiting a wider variety of dynamical behaviors.
	
\begin{Remark}
Analogously to the set $U_\delta^r$, we define
\[
V^r_\delta := \left\{ \phi \in C^\omega(\mathbb{T}) \ \middle| \ \|\phi\|_{C^{r}} < \delta \right\}.
\]
It is known \cite{H11,H1} that $\mathcal{F}_\alpha^\omega = \mathcal{O}_\alpha^\omega$, and for each $\alpha \in \mathcal{C}$, there exists $\delta > 0$ such that
\begin{equation}\label{hkamx}
V_\delta^3 \subseteq \Phi(\mathcal{F}_\alpha^\omega).
\end{equation}
Moreover, Herman's result \cite[Theorem 4.9]{H1} also applies in the $C^\omega$ category (see \cite{W22}). Consequently, we have
\[
V_\delta^{3} \subsetneqq V_\delta^{3 - \varepsilon} \cap \Phi(\mathcal{O}_\alpha^\omega), \quad
V_\delta^{3 - \varepsilon} \cap \Phi(\mathcal{F}_\alpha^0) \subsetneqq V_\delta^{3 - \varepsilon}.
\]
However, for the reasons outlined in Remark~\ref{rearre}, it remains unclear whether the two intermediate sets coincide.
\end{Remark}

	\vspace{1ex}

In addition to the open problems mentioned above, the authors believe that answering the following question is crucial for deepening our understanding of non-KAM invariant graphs:

\begin{itemize}
    \item \textbf{Question $\diamondsuit$:} \textit{Does there exist a $C^\omega$ area-preserving twist map, which is a $C^r$-small perturbation ($r < 3$) of an integrable system, that admits an invariant graph with a constant type rotation number $\alpha$, and on which the dynamics is singularly conjugate to the rigid rotation $R_\alpha$?}
\end{itemize}

By the correspondence between twist maps and circle maps, if such a map exists in Question $\diamondsuit$, then the regularity of the preserved invariant graph cannot exceed $C^2$.

		%\vspace{1em}
%		\subsection*{Organization of this Note}
%		The note is organized as follows.
	 \vspace{2em}

 \noindent\textbf{Acknowledgement.}
 This work was partially supported by the National Natural Science Foundation of China (Grant No.~12122109).

\vspace{1ex}

\noindent\textbf{Data Availability Statement.}
		The authors state that this manuscript has no associated data and there is no conflict of interest.

	\vspace{1em}	
			\section{\sc Preliminaries}

%\begin{Proposition}\label{SSH}
%For all $0<\varepsilon<1$, $\delta>0$, $M>0$, and $\alpha\in[0,1)$, there exists a twist map $F(x,y) = (x + \beta + y + \phi(x), y + \phi(x))$ such that $\|\phi\|_{C^{3-\varepsilon}} < \delta$, $\|\phi\|_{C^3} > M$, and $F$ admits an invariant circle with rotation number $\alpha$.
%\end{Proposition}

\subsection{The Herman Formula}Inspired by Herman \cite{H1}, the proof proceeds by exploiting the relationship between  twist maps and orientation-preserving circle diffeomorphisms. We first construct a suitable circle map and then derive the corresponding twist map.

To build the required example, we consider a family of area-preserving twist maps defined by:
\begin{equation}\label{deF}
F_\beta^\phi : \mathbb{T} \times \mathbb{R} \to \mathbb{T} \times \mathbb{R}, \quad\mathrm{via}\ \ (x, y) \mapsto \left(x + \beta + y + \phi(x),\ y + \phi(x)\right),
\end{equation}
where $\phi : \mathbb{T} \to \mathbb{R}$ is a $C^1$ function with zero mean, i.e., $\int_{\mathbb{T}} \phi(\theta)\,d\theta = 0$. Recall the convention that we use the same notation for the following objects and their lifts: $\phi$, $\Gamma$, $\Psi$, and $g$.

\begin{Lemma}\label{herob}
The graph $\Gamma = \{(x, \Psi(x)) \mid x \in \mathbb{T}\}$ is an invariant circle of $F_\beta^\phi$ if and only if the following conditions are satisfied:
\begin{enumerate}
    \item[(1)] The map $g_\beta := \mathrm{Id} + \beta + \Psi + \phi$ belongs to $D^0(\mathbb{T})$;
    \item[(2)] The identity $\mathrm{Id} + \frac{1}{2}\phi = \frac{1}{2}\left(g_\beta + g_\beta^{-1}\right)$ holds.
\end{enumerate}
\end{Lemma}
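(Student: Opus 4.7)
The plan is to follow the standard Herman decomposition: iterate $F_\beta^\phi$ on the candidate graph, observe that invariance of $\Gamma$ is equivalent to a functional equation $\Psi \circ g_\beta = \Psi + \phi$, and then rewrite this equation in the symmetric form (2) by using the definition of $g_\beta$. Both implications should drop out by pure algebra once the setup is in place, so the only thing requiring some care is checking that $g_\beta$ is actually a homeomorphism of $\mathbb{R}$ lifting an element of $D^0(\mathbb{T})$.

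For the forward direction, I would first compute
\[
F_\beta^\phi(x, \Psi(x)) = \bigl(x+\beta+\Psi(x)+\phi(x),\; \Psi(x)+\phi(x)\bigr) = \bigl(g_\beta(x),\; \Psi(x)+\phi(x)\bigr),
\]
so invariance of $\Gamma$ forces $\Psi(g_\beta(x)) = \Psi(x)+\phi(x)$ for every $x$. The fact that $\Gamma$ is a topological circle and $F_\beta^\phi$ restricts to a homeomorphism on $\Gamma$, together with the projection onto the first coordinate being a homeomorphism $\Gamma \to \mathbb{T}$, means that $g_\beta$ is conjugate to $F_\beta^\phi|_\Gamma$ and is therefore a homeomorphism of $\mathbb{T}$; since $\Psi$ and $\phi$ are $1$-periodic, the lift satisfies $g_\beta - \mathrm{Id} \in C^0(\mathbb{T})$, which gives (1). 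To obtain (2), I substitute $\Psi = g_\beta - \mathrm{Id} - \beta - \phi$ into $\Psi\circ g_\beta = \Psi + \phi$ and simplify:
\[
g_\beta \circ g_\beta - g_\beta - \beta - \phi\circ g_\beta = g_\beta - \mathrm{Id} - \beta,
\]
i.e.\ $g_\beta \circ g_\beta + \mathrm{Id} = 2 g_\beta + \phi\circ g_\beta$. Composing with $g_\beta^{-1}$ on the right yields $g_\beta + g_\beta^{-1} = 2\,\mathrm{Id} + \phi$, which is exactly (2).

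For the converse, I assume (1) and (2) and reverse the algebra: multiplying (2) through and substituting $u = g_\beta(x)$ gives $g_\beta\circ g_\beta(x) = 2 g_\beta(x) - x + \phi(g_\beta(x))$, and using the definition $\Psi = g_\beta - \mathrm{Id} - \beta - \phi$ twice (once at $x$ and once at $g_\beta(x)$) this rearranges to $\Psi(g_\beta(x)) = \Psi(x) + \phi(x)$. Plugging this back into the formula for $F_\beta^\phi(x,\Psi(x))$ above shows that $F_\beta^\phi$ sends $(x,\Psi(x))$ to $(g_\beta(x), \Psi(g_\beta(x)))$, a point of $\Gamma$, and since $g_\beta$ is a homeomorphism by (1), this covers all of $\Gamma$, establishing invariance.

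The one mildly delicate step is the justification in the forward direction that $g_\beta$ is a homeomorphism rather than merely a continuous self-map of $\mathbb{T}$: this uses that $\Gamma$ is homotopically nontrivial (so the first-coordinate projection is a homeomorphism onto $\mathbb{T}$) together with the fact that $F_\beta^\phi|_\Gamma$ is itself a homeomorphism. Everything else is formal computation, and the symmetry of the resulting identity (2) reflects the well-known decomposition of the twist map into two involutions, which is what makes the $\beta$-independence of $\Phi(g_\beta)$ (pointed out in the Remark after the statement) manifest.
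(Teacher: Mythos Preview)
Your proof is correct and follows essentially the same approach as the paper's: both compute $F_\beta^\phi(x,\Psi(x)) = (g_\beta(x),\Psi(x)+\phi(x))$, reduce invariance to the functional equation $\Psi\circ g_\beta = \Psi+\phi$, and then rewrite this as (2) using the definition $\Psi = g_\beta-\mathrm{Id}-\beta-\phi$. The only cosmetic difference is that the paper evaluates the functional equation at $g_\beta^{-1}(x)$ to get $\Psi(x)=x-g_\beta^{-1}(x)-\beta$ and equates the two expressions for $\Psi$, whereas you substitute directly and post-compose with $g_\beta^{-1}$; these are algebraically equivalent, and your justification that $g_\beta$ is a homeomorphism is in fact more explicit than the paper's.
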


\begin{proof}We denote $F:=F_\beta^\phi$ and $g:=g_\beta$ for simplicity.

($\Rightarrow$) Suppose $\Gamma$ is invariant under $F$.
\begin{enumerate}
    \item[(1)] Let $\pi$ be the projection onto the first coordinate. Then $g(x) = \pi \circ F(x, \Psi(x)) = x + \beta + \Psi(x) + \phi(x)$, which implies $g_\beta\in D^1(\T)$.
    \item[(2)] From (\ref{deF}), we have:
    \[
    F(x, \Psi(x)) = (x + \beta + \Psi(x) + \phi(x),\ \Psi(x) + \phi(x)).
    \]
    Note that $g(x) = x + \beta + \Psi(x) + \phi(x)$. The invariance of $\Gamma$ implies $\Psi(x) + \phi(x) = \Psi(g(x))$, or equivalently,
    \[
    g(x) - x - \beta = \Psi(g(x)) \quad \text{and} \quad x - g^{-1}(x) - \beta = \Psi(x).
    \]
    Using $\Psi(x) = g(x) - x - \beta - \phi(x)$, we obtain:
    \[
    x - g^{-1}(x) - \beta = g(x) - x - \beta - \phi(x),
    \]
    which rearranges to:
    \[
    \mathrm{Id} + \frac{1}{2}{\phi} = \frac{1}{2}\left({g} + {g}^{-1}\right).
    \]
\end{enumerate}

($\Leftarrow$) Conversely, assume (1) and (2) hold. Then:
\[
F(x, \Psi(x)) = (g(x),\ \Psi(x) + \phi(x)).
\]
From condition (2), we have $x - g^{-1}(x) - \beta = \Psi(x)$. Hence,
\[
\Psi(x) + \phi(x) = \Psi(g(x)).
\]
Since $g\in D^1(\T)$, it follows that $\Gamma$ is an invariant graph of $F$.
\end{proof}

\vspace{1ex}

\subsection{The Arnold Family}\label{2.2}

The Arnold family (or standard family) is a two-parameter family of circle maps defined by:
\begin{equation}\label{arnf}
f_{\lambda, \sigma}(x): = x + \lambda + \frac{\sigma}{2\pi} \sin(2\pi x) \pmod{1}, \quad \lambda, \sigma \in \mathbb{R}.
\end{equation}
This family was first studied by V. Arnold \cite{Ar}. In this work, we consider the parameter range $\sigma \in [0,1)$, which ensures that each map is a diffeomorphism.

\begin{itemize}
\item When $\sigma = 0$, the map reduces to a rigid rotation: $f_{\lambda,0}(\theta) = R_\lambda(\theta)$, so $\rho(f_{\lambda,0}) = \lambda$.
\item For $\lambda = 0$, the point $\theta = 0$ is a fixed point for all $\sigma$, so $\rho(f_{0,\sigma}) = 0$.
\item The rotation number $\rho(f_{\lambda,\sigma})$ is continuous in both $\lambda$ and $\sigma$, and  $\rho(f_{\lambda,\sigma})$ takes all values in $[0,1]$ as $\lambda$ varies.
    \end{itemize}
 Let us recall a key fact about the Arnold family (see \cite[Chapter I, Lemma 4.2]{de} for instance).

\begin{Lemma}
Fix $0 < \sigma < 1$. Suppose $\lambda_0$ is such that $\rho(f_{\lambda_0,\sigma}) = p/q$ is rational. Then there exists an interval $I$ containing $\lambda_0$ with nonempty interior such that $\rho(f_{\lambda,\sigma}) = p/q$ for all $\lambda \in I$. Moreover, the function $\lambda \mapsto \rho(f_{\lambda,\sigma})$ is increasing, locally constant at rational rotation numbers, and strictly increasing at irrational values.
\end{Lemma}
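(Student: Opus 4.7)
My plan is to verify the three assertions of the lemma separately: monotonicity, local constancy of $\rho$ at rational rotation numbers (the Arnold tongue structure), and strict monotonicity at irrational ones. For monotonicity, the lift $F_{\lambda,\sigma}(x) = x + \lambda + (\sigma/2\pi)\sin(2\pi x)$ is strictly increasing in $\lambda$ for each fixed $x$, so each iterate is monotone in $\lambda$, and $\rho(f_{\lambda,\sigma}) = \lim_n (F^n_{\lambda,\sigma}(x)-x)/n$ is non-decreasing in $\lambda$. Continuity of $\rho$ in $\lambda$ is a classical property of families of circle homeomorphisms.

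For local constancy at a rational value $p/q$, I would introduce $G_\lambda(x) := F^q_{\lambda,\sigma}(x) - x - p$ and observe that $\rho(f_{\lambda,\sigma}) = p/q$ is equivalent to $G_\lambda$ vanishing somewhere on $\mathbb{R}$ (equivalently, $f^q_{\lambda,\sigma}$ having a fixed point on $\mathbb{T}$). Since $G_\lambda$ is continuous in $x$, strictly increasing in $\lambda$ pointwise, and $\rho$ is continuous, the set $I_{p/q} := \{\lambda : \rho(f_{\lambda,\sigma}) = p/q\}$ is a closed interval. To show $I_{p/q}$ has nonempty interior, I would examine the behavior of $G_{\lambda_0}$: if it attains both signs, the intermediate value theorem persists under small perturbations, placing $\lambda_0$ in the interior; if $G_{\lambda_0} \geq 0$ with a tangency zero, then $G_\lambda$ acquires negative values near the tangency for $\lambda$ slightly below $\lambda_0$ while staying positive elsewhere, making $\lambda_0$ the right endpoint of a nondegenerate interval (and symmetrically for $G_{\lambda_0} \leq 0$). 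The delicate case is $G_{\lambda_0} \equiv 0$, equivalent to $f^q_{\lambda_0,\sigma} = \mathrm{Id}_{\mathbb{T}}$; this would make $\langle f_{\lambda_0,\sigma}\rangle$ a finite cyclic subgroup of $\mathrm{Diff}^\omega_+(\mathbb{T})$, hence smoothly conjugate to $\langle R_{p/q}\rangle$, but differentiating the conjugacy relation and using that $f'_{\lambda_0,\sigma}(x) = 1+\sigma\cos(2\pi x)$ is non-constant for $\sigma \in (0,1)$ yields a contradiction (alternatively, a direct test-point calculation on the real-analytic function $G_{\lambda_0}$ rules out identical vanishing).

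For strict monotonicity at irrational $\alpha$, I would argue by contradiction. Suppose $\lambda_1 < \lambda_2$ give the same rotation number $\alpha \notin \mathbb{Q}$, and note that $F_{\lambda_2,\sigma}(x) - F_{\lambda_1,\sigma}(x) = \lambda_2 - \lambda_1 > 0$ uniformly in $x$. By Denjoy's theorem (applicable since the family is $C^\omega \subset C^2$), $f_{\lambda_1,\sigma}$ is topologically conjugate to $R_\alpha$ via some homeomorphism $h$; setting $\tilde F := h \circ F_{\lambda_2,\sigma} \circ h^{-1}$ gives a circle homeomorphism with $\tilde F(y) > y + \alpha$ for all $y$ and $\rho(\tilde F) = \alpha$. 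Since $f_{\lambda_2,\sigma}$ is likewise conjugate to $R_\alpha$, so is $\tilde F$, and hence its unique invariant probability measure $\mu$ has full support on $\mathbb{T}$. The Birkhoff identity then yields $\alpha = \rho(\tilde F) = \int (\tilde F - \mathrm{Id})\,d\mu > \int \alpha\,d\mu = \alpha$, a contradiction. The main obstacle throughout is the degenerate case $G_{\lambda_0} \equiv 0$ in the tongue analysis, which requires careful use of the real-analytic structure of the Arnold family to exclude; the remaining steps reduce to standard applications of continuity, monotonicity, the intermediate value theorem, Denjoy's theorem, and the Birkhoff identity for rotation numbers.
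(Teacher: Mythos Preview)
The paper does not supply a proof of this lemma; it is stated as a classical fact with a reference to de~Melo--van~Strien \cite{de}. Your outline follows the standard route and is correct in its main lines: monotonicity of $\rho$ in $\lambda$ from pointwise monotonicity of the lifts, the interval structure of $\rho^{-1}(p/q)$ via the sign analysis of $G_\lambda(x)=F^q_{\lambda,\sigma}(x)-x-p$, and strict monotonicity at irrational $\alpha$ via conjugation combined with the integral formula $\rho=\int(\tilde F-\mathrm{Id})\,d\mu$. (In that last step the appeal to full support is unnecessary: once $\tilde F(y)>y+\alpha$ holds for \emph{every} $y$, any invariant probability measure already gives $\int(\tilde F-\mathrm{Id})\,d\mu>\alpha$.)

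There is, however, a genuine gap in precisely the case you single out as delicate. From $f^q_{\lambda_0,\sigma}=\mathrm{Id}$ you correctly infer that $f_{\lambda_0,\sigma}$ is smoothly conjugate to $R_{p/q}$, but the claim that ``differentiating the conjugacy relation and using that $f'_{\lambda_0,\sigma}$ is non-constant yields a contradiction'' is false as stated: a smooth conjugate $h^{-1}\circ R_{p/q}\circ h$ of a rotation has derivative $h'(x)/h'(f(x))$, which is generically non-constant, so non-constancy of $f'$ alone gives nothing. The clean way to exclude $G_{\lambda_0}\equiv 0$ uses the \emph{global} analytic structure of the Arnold family: the lift $F_{\lambda_0,\sigma}(z)=z+\lambda_0+(\sigma/2\pi)\sin(2\pi z)$ extends to an entire function on $\mathbb{C}$, so if $F^q_{\lambda_0,\sigma}(x)=x+p$ on $\mathbb{R}$ then $F^q_{\lambda_0,\sigma}(z)=z+p$ on all of $\mathbb{C}$ by the identity theorem; this forces $F_{\lambda_0,\sigma}$ to be an injective entire map, hence affine, contradicting $\sigma\neq 0$. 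Your parenthetical ``direct test-point calculation on the real-analytic function $G_{\lambda_0}$'' points in the right direction but needs to be made precise along these lines.
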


\begin{figure}[htbp]\label{fi31}
\small \centering
\includegraphics[width=8.5cm]{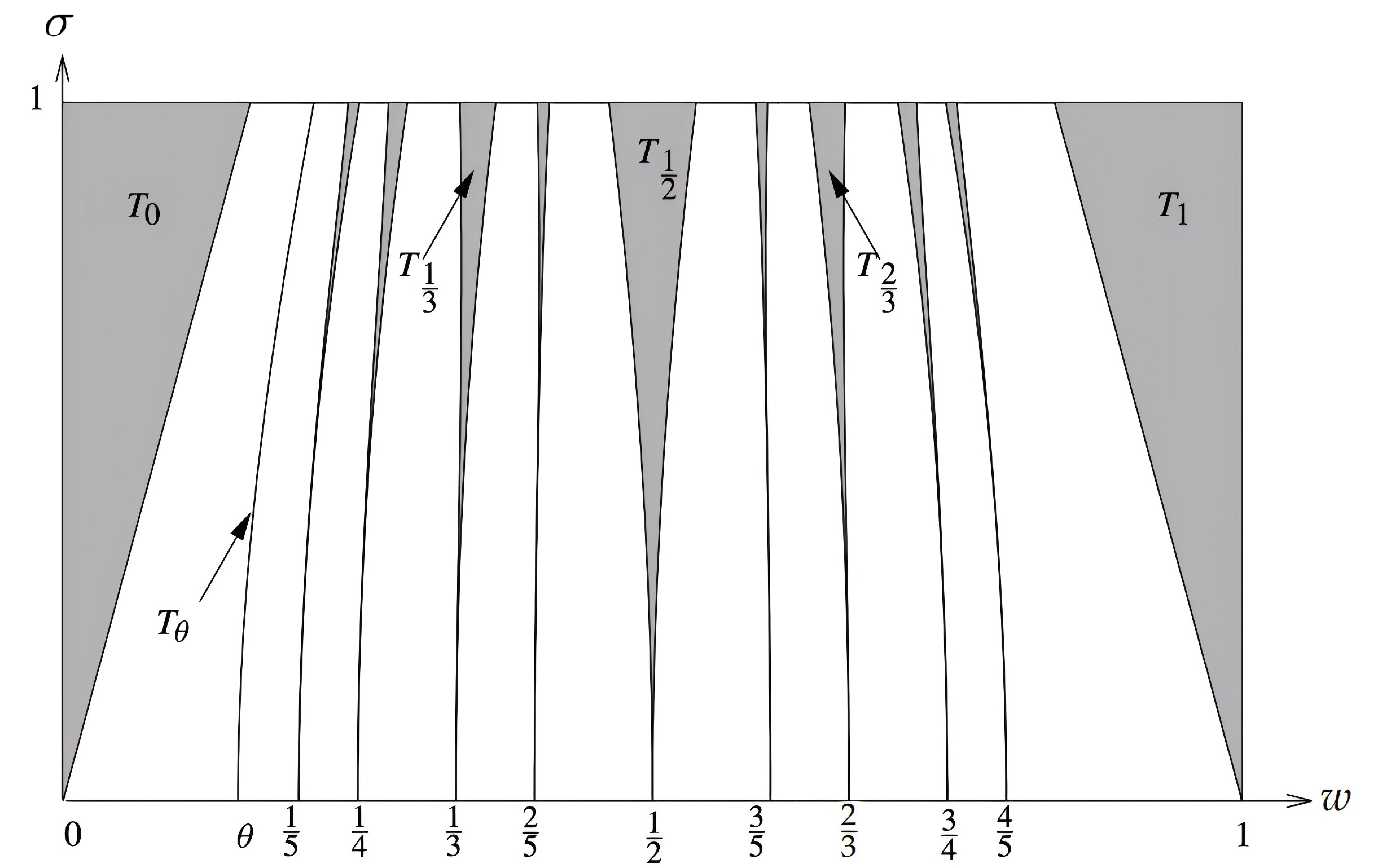}
\caption{Arnold tongues: level sets $T_\rho$ of the rotation number $\rho$ for $w,\sigma\in[0,1]$. Tongues of every rational number are connected and have nonempty interior.  Besides, if $\theta$ is an irrational number, $T_\theta$ may not have an interior point.}
\end{figure}

The following result was proved by Herman \cite[Page 169, Prop. 1.13]{H11}.
\begin{Lemma}\label{Yha}
Let $0 < \sigma < 1$. Define
\[
K_\sigma := [0,1] \setminus \mathrm{Int}\{\lambda \mid \rho(f_{\lambda,\sigma}) \in \mathbb{Q} \cap [0,1]\}.
\]
There exists a residual set in $K_\sigma$ such that for each $\lambda \in K_\sigma$, the map $f_{\lambda,\sigma}$ is singular conjugate to $R_{\alpha}$ with $\alpha = \rho(f_{\lambda,\sigma})$.
\end{Lemma}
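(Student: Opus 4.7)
The plan is a Baire category argument on the complete metric space $K_\sg$, exploiting the countable subset $\mathcal{E}\subset K_\sg$ of endpoints of Arnold tongues $T_{p/q}$ for $p/q\in\Q\cap[0,1]$. At any $\lb^{*}\in\mathcal{E}$, the unique invariant probability measure of $f_{\lb^{*},\sg}$ is purely atomic, supported on a single parabolic periodic orbit. Propagating this concentration quantitatively via weak-$*$ continuity of $\lb\mapsto\mu_\lb$ will produce a residual set of parameters whose invariant measures are singular with respect to Lebesgue measure; under the Denjoy conjugacy $f_{\lb,\sg}=h^{-1}\circ R_\al\circ h$, singularity of $\mu_\lb$ is equivalent to $h$ failing to be absolutely continuous.

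Density of $\mathcal{E}$ in $K_\sg$ follows from the preceding lemma: at any $\lb_{0}\in K_\sg$, approximating $\rho(\lb_{0})$ by rationals $p/q$ produces tongues $T_{p/q}$ whose boundary points approach $\lb_{0}$ by continuity and monotonicity of $\rho$. Unique ergodicity holds throughout $K_\sg$: for irrational $\rho(\lb)$ it is Denjoy's theorem applied to the analytic diffeomorphism $f_{\lb,\sg}$; at $\lb^{*}\in\mathcal{E}$ with $\rho(\lb^{*})=p/q$, all non-periodic orbits converge to the single parabolic periodic orbit, forcing $\mu_{\lb^{*}}=\tfrac{1}{q}\sum_{i=1}^{q}\delta_{y_{i}}$. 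Upper semi-continuity of the set of invariant probabilities then upgrades to weak-$*$ continuity of the single-valued map $\lb\mapsto\mu_\lb$ on $K_\sg$.

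For $n,k\in\N$, set
\[
C_{n,k}:=\bigl\{\lb\in K_\sg\ \big|\ \exists\ \text{open}\ U\subset\T,\ m(U)<1/k,\ \mu_\lb(U)>1-1/n\bigr\}.
\]
Each $C_{n,k}$ is open in $K_\sg$ by weak-$*$ lower semi-continuity of $\mu\mapsto\mu(U)$ on open $U$. It is also dense: at any $\lb^{*}\in\mathcal{E}$ with tongue $T_{p/q}$, take $U$ to be a disjoint union of $q$ open intervals each of length $<1/(qk)$ around the periodic points supporting $\mu_{\lb^{*}}$; then $m(U)<1/k$ and $\mu_{\lb^{*}}(U)=1>1-1/n$, so $\mathcal{E}\subset C_{n,k}$, and density of $\mathcal{E}$ in $K_\sg$ transfers. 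By the Baire category theorem, $\mathcal{R}_\sg:=\bigcap_{n,k\in\N}C_{n,k}$ is residual in $K_\sg$; removing the countable rational-rotation-number locus leaves a still-residual set $\mathcal{R}_\sg^{\prime}$ of parameters with irrational rotation numbers.

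For $\lb\in\mathcal{R}_\sg^{\prime}$, select $U_{n}$ with $m(U_{n})<2^{-n}$ and $\mu_\lb(U_{n})>1-1/n$; the set $E:=\limsup_{n}U_{n}$ satisfies $m(E)=0$ by Borel-Cantelli while $\mu_\lb(E)=1$, so $\mu_\lb$ is singular with respect to Lebesgue measure. Writing $f_{\lb,\sg}=h^{-1}\circ R_{\al}\circ h$ with $\al=\rho(\lb)\in\R\setminus\Q$, the pushforward identity $h_{*}\mu_\lb=m$ yields $\mu_\lb(B)=m(h(B))$ for all Borel $B$; hence singularity of $\mu_\lb$ forces $h$ to send some Lebesgue-null set to a set of full Lebesgue measure, i.e., $h$ fails the Luzin N property and hence (by Banach's theorem for monotone continuous functions) is not absolutely continuous. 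The main technical obstacle is the rigorous verification of weak-$*$ continuity of $\lb\mapsto\mu_\lb$ at tongue-endpoint parameters: one must handle the parabolic nature of the neutral periodic orbit there to rule out any loss of mass in the weak limit as $\lb\in K_\sg$ approaches an endpoint of $T_{p/q}$.
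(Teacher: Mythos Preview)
The paper does not prove this lemma itself; it is quoted from Herman \cite[p.~169, Prop.~1.13]{H11}. The mechanism behind Herman's argument is, however, recalled later (in the proof of Theorem~\ref{M01}): one introduces the functional
\[
\mathcal N(f)=\inf_{n\geq 1}\int_{\T}\frac{\bigl|\tfrac{1}{n}\sum_{i=0}^{n-1}Df^{i}\bigr|}{1+\bigl|\tfrac{1}{n}\sum_{i=0}^{n-1}Df^{i}\bigr|}\,dm,
\]
notes that $\mathcal N$ is upper semicontinuous in the $C^{1}$-topology so that $\mathcal N^{-1}(0)$ is automatically $G_\delta$, and takes the residual set to be $\mathcal N^{-1}(0)\cap(K_\sigma\setminus D_\sigma)$. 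Your argument is a genuinely different route: rather than tracking the derivative cocycle, you work directly with the invariant measures $\mu_\lambda$ and their weak-$*$ behaviour. Both proofs share the same Baire skeleton on $K_\sigma$ with tongue endpoints supplying density, but Herman's functional is purpose-built so that the $G_\delta$ property is immediate, whereas you assemble the open dense sets $C_{n,k}$ by hand; your version is more geometric and arguably more transparent. Two comments on the write-up. First, the assertion of a \emph{single} parabolic periodic orbit at every tongue endpoint for all $\sigma\in(0,1)$ and all $p/q$ is not obvious, but you do not need it: at any $\lambda^{*}$ with rational rotation number the analytic map $f_{\lambda^{*},\sigma}$ has only finitely many periodic orbits, every invariant probability is atomic on these, and taking $U$ to be a small neighbourhood of \emph{all} of them still gives $\nu(U)=1$ for every invariant $\nu$; the lower-semicontinuity of $\mu_\lambda(U)$ at nearby irrational-rotation parameters then yields density of $C_{n,k}$ exactly as you wrote. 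Second, the ``loss of mass'' you flag cannot occur on the compact space $\T$: weak-$*$ limits of probabilities are probabilities, and any such limit is $f_{\lambda^{*},\sigma}$-invariant by continuity of the family in $\lambda$, so the obstacle you anticipate is not there.
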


We now consider a modification of the classical Arnold family. For $\kappa > 0$, choose $n$ sufficiently large such that $n^{\kappa} > 2\pi$, and define
\begin{equation}\label{armod}
f_{\beta,n}(x) := x + n\beta + \frac{1}{n^{\kappa}} \sin(2\pi x).
\end{equation}

Let
\[
h_n(\beta) := \rho(f_{\beta,n}).
\]
For $a \in \mathbb{R} \setminus \{0\}$ and a set $U$, define
\[
aU := \{a u \mid u \in U\}, \quad a + U := \{a + u \mid u \in U\}.
\]
Note that affine maps are homeomorphisms. The following is immediate:

\begin{Lemma}\label{resll}
For each $a \in \mathbb{R} \setminus \{0\}$, if $U$ is a residual set in $V$, then $aU$ (resp. $a + U$) is also a residual set in $aV$ (resp. $a + V$).
\end{Lemma}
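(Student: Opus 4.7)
The statement is an elementary topological fact, so the plan is short. I would first recall that a subset $U$ of a topological space $V$ is \emph{residual} if it contains a countable intersection $\bigcap_{n\in\mathbb{N}} G_n$ where each $G_n$ is open and dense in $V$ (equivalently, the complement $V\setminus U$ is meager in $V$). The key observation is that the two maps in question,
\[
T_a^{\text{mult}}:\mathbb{R}\to\mathbb{R},\quad x\mapsto ax,\qquad T_a^{\text{add}}:\mathbb{R}\to\mathbb{R},\quad x\mapsto a+x,
\]
are homeomorphisms of $\mathbb{R}$ whenever $a\in\mathbb{R}\setminus\{0\}$ (the additive one is in fact a homeomorphism for every $a$). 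Write $T$ for either of them.

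The plan is then to exploit the fact that any homeomorphism of a topological space preserves open sets, dense sets, and countable intersections, hence residuality. Concretely, suppose $U\subseteq V$ is residual, so there exist sets $G_n$, each open and dense in $V$ (equipped with the subspace topology from $\mathbb{R}$), with $\bigcap_{n}G_n\subseteq U$. Since $T$ restricted to $V$ is a homeomorphism onto $T(V)$, the sets $T(G_n)$ are open and dense in $T(V)$ in its subspace topology. Because $T$ is a bijection,
\[
T\!\left(\bigcap_{n}G_n\right)=\bigcap_{n}T(G_n)\subseteq T(U),
\]
so $T(U)$ contains a countable intersection of sets open and dense in $T(V)$, i.e.\ $T(U)$ is residual in $T(V)$. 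Applying this with $T=T_a^{\text{mult}}$ gives $aU$ residual in $aV$, and with $T=T_a^{\text{add}}$ gives $a+U$ residual in $a+V$. There is no real obstacle here; the only thing to be careful about is to work with the subspace topologies on $V$ and $T(V)$ rather than accidentally requiring the $G_n$ to be open in $\mathbb{R}$.
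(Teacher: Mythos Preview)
Your argument is correct and is exactly the approach the paper has in mind: the paper simply remarks that affine maps are homeomorphisms and declares the lemma immediate, and your proposal spells out precisely why a homeomorphism preserves residuality. There is nothing to add.
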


By Lemma~\ref{Yha}, a direct translation yields:

\begin{Lemma}\label{Yhax}
Let $N \in \mathbb{N}$ satisfy $N^{\kappa} > 2\pi$. Define
\[
K_N := [0,1] \setminus \mathrm{Int}\{N\beta \mid h_N(\beta) \in \mathbb{Q} \cap [0,1]\}.
\]
There exists a residual set $G_N$ in $K_N$ such that for each $\beta \in \frac{1}{N} G_N$, the map $f_{\beta,N}$ is singular conjugate to $R_{\alpha}$ with $\alpha = h_N(\beta)$.
\end{Lemma}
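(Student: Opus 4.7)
The plan is to view $f_{\beta,N}$ in \eqref{armod} as a re-parametrization of the classical Arnold family \eqref{arnf} and then pull back Herman's residuality statement (Lemma~\ref{Yha}) using the affine-image principle (Lemma~\ref{resll}). Concretely, I would introduce the parameters $\lambda := N\beta$ and $\sigma_N := 2\pi/N^{\kappa}$, so that
\[
f_{\beta,N}(x) \;=\; x + \lambda + \frac{\sigma_N}{2\pi}\sin(2\pi x) \;=\; f_{\lambda,\sigma_N}(x).
\]
The standing assumption $N^{\kappa} > 2\pi$ forces $\sigma_N \in (0,1)$, placing the map squarely within the hypotheses of Lemma~\ref{Yha}.

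Next I would verify that the two definitions of ``$K$'' coincide under this identification. Since $\rho(f_{\lambda,\sigma_N}) = h_N(\beta)$ whenever $\lambda = N\beta$, the set $\{N\beta : h_N(\beta)\in\mathbb{Q}\cap[0,1]\}$ is exactly the set $\{\lambda : \rho(f_{\lambda,\sigma_N})\in\mathbb{Q}\cap[0,1]\}$ that appears in Lemma~\ref{Yha}. Because $\lambda\mapsto\lambda$ is tautologically a homeomorphism of $[0,1]$, taking interior and complement within $[0,1]$ commutes with this identification, giving $K_N = K_{\sigma_N}$. Lemma~\ref{Yha} then furnishes a residual subset $G_N \subset K_{\sigma_N} = K_N$ on which every map $f_{\lambda,\sigma_N}$ is singularly conjugate to $R_{\rho(f_{\lambda,\sigma_N})}$. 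Applying Lemma~\ref{resll} to the dilation $\lambda \mapsto \lambda/N$ shows that $\tfrac{1}{N}G_N$ is residual in $\tfrac{1}{N}K_N$. For any $\beta \in \tfrac{1}{N}G_N$, the associated value $N\beta$ lies in $G_N$, so $f_{\beta,N} = f_{N\beta,\sigma_N}$ is singularly conjugate to $R_\alpha$ with $\alpha = \rho(f_{N\beta,\sigma_N}) = h_N(\beta)$, which is the claim.

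Since every nontrivial ingredient is already encoded in Lemmas~\ref{Yha} and~\ref{resll}, I do not expect a real obstacle. The one item deserving explicit mention is that the affine change of variables commutes with the operation ``remove the interior of the rational rotation locus'' in the definition of $K_N$; this is automatic because $\beta \mapsto N\beta$ is an order-preserving homeomorphism onto its image and thus preserves both interiors and residuality. Accordingly, the lemma should follow by a short, essentially bookkeeping, argument.
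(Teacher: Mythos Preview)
Your argument is correct and matches the paper's approach: the paper simply states that Lemma~\ref{Yhax} follows from Lemma~\ref{Yha} ``by a direct translation,'' and what you have written is precisely that translation, spelling out the identification $\lambda = N\beta$, $\sigma_N = 2\pi/N^{\kappa}\in(0,1)$ and checking that $K_N = K_{\sigma_N}$. The invocation of Lemma~\ref{resll} is not strictly needed for the statement as phrased (only residuality of $G_N$ in $K_N$ is asserted, not of $\tfrac{1}{N}G_N$ in $\tfrac{1}{N}K_N$), but it does no harm.
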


For $N \in \mathbb{N}$ with $N^{\kappa} > 2\pi$, define
\[
\mathcal{K}_N := [0, N] \setminus \mathrm{Int}\{N\beta \mid h_N(\beta) \in \mathbb{Q} \cap [0,N]\},
\]
\[
\mathcal{D}_N := \mathcal{K}_N \cap \{N\beta \mid h_N(\beta) \in \mathbb{Q} \cap [0,N]\}.
\]
Then the following hold:
\begin{itemize}
\item $h_N: \mathcal{K}_N \to [0, N]$ is surjective;
\item $h_N: \mathcal{K}_N \setminus \mathcal{D}_N \to [0, N] \setminus \mathbb{Q}$ is injective.
\end{itemize}

By \cite[Page 38, Prop. 5.3(b)]{H11}, we have:

\begin{Lemma}\label{Yhamdx}
For each residual set $\mathcal{R} \subset \mathcal{K}_N$, the image $h_N(\mathcal{R})$ is also a residual set in $[0, N]$.
\end{Lemma}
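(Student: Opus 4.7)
The plan is to exploit the devil's-staircase structure of $h_N|_{\mathcal{K}_N}$. Since every rational Arnold tongue has nonempty interior, each $p/q\in\mathbb{Q}\cap[0,N]$ is attained at exactly two points of $\mathcal{K}_N$ (the endpoints $a_{p/q}^{\pm}$ of the tongue), while every irrational $\alpha\in[0,N]\setminus\mathbb{Q}$ has a unique preimage in $\mathcal{K}_N$. Thus $\mathcal{D}_N$ is countable, and the excerpt already records that $h_N$ restricts to a continuous bijection from $\mathcal{K}_N\setminus\mathcal{D}_N$ onto $[0,N]\setminus\mathbb{Q}$.

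I would first promote this restriction to a homeomorphism by proving openness: given an open $U\subset\mathcal{K}_N\setminus\mathcal{D}_N$ and $w_0\in U$ with $y_0=h_N(w_0)$ irrational, the uniqueness of $w_0$ as preimage of $y_0$, combined with the continuity and monotonicity of $h_N$, forces the unique preimage of every irrational $y$ sufficiently close to $y_0$ to lie in $U$ (otherwise a subsequence would converge to a second preimage of $y_0$). Next, since each endpoint $a_{p/q}^{\pm}$ is accumulated by endpoints of Arnold tongues associated to rationals near $p/q$, no point of $\mathcal{D}_N$ is isolated in $\mathcal{K}_N$; hence the countable set $\mathcal{D}_N$ is meager and $\mathcal{K}_N\setminus\mathcal{D}_N$ is a dense $G_\delta$ in the Polish space $\mathcal{K}_N$. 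For any residual $\mathcal{R}\subset\mathcal{K}_N$, the set $\mathcal{R}':=\mathcal{R}\cap(\mathcal{K}_N\setminus\mathcal{D}_N)$ therefore remains residual in $\mathcal{K}_N\setminus\mathcal{D}_N$, and the homeomorphism sends it to a residual subset of $[0,N]\setminus\mathbb{Q}$. Finally, because $[0,N]\setminus\mathbb{Q}$ is itself a dense $G_\delta$ in $[0,N]$, every dense $G_\delta$ inside $[0,N]\setminus\mathbb{Q}$ is already a dense $G_\delta$ of $[0,N]$; consequently $h_N(\mathcal{R})\supset h_N(\mathcal{R}')$ is residual in $[0,N]$.

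The main subtlety is the openness step: on the full set $\mathcal{K}_N$ the map $h_N$ is \emph{not} open, because at a tongue endpoint $a_{p/q}^{-}$ the nearest point of $\mathcal{K}_N$ to the right is $a_{p/q}^{+}$, so the image of a small neighborhood in $\mathcal{K}_N$ is only one-sided around $p/q$. Removing $\mathcal{D}_N$ is precisely what restores strict monotonicity of $h_N$ at the remaining (irrational) values and yields the open-mapping property; once this is in place, the residuality transfers — meager $\mathcal{D}_N$, residual intersected with dense $G_\delta$, homeomorphism image, and passage from $[0,N]\setminus\mathbb{Q}$ to $[0,N]$ — are routine Baire-category manipulations.
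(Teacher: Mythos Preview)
Your argument is correct. The paper does not actually prove this lemma: it simply cites Herman \cite[Page 38, Prop.~5.3(b)]{H11} and moves on. What you have written is essentially a self-contained reconstruction of Herman's argument for that proposition---restrict the devil's-staircase map to the complement of the countable endpoint set $\mathcal{D}_N$, verify that the resulting bijection onto $[0,N]\setminus\mathbb{Q}$ is a homeomorphism, and then push residuality through using that $\mathcal{D}_N$ is meager in $\mathcal{K}_N$ and $[0,N]\setminus\mathbb{Q}$ is a dense $G_\delta$ in $[0,N]$.

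A couple of minor remarks on the write-up. First, the paper only states that $h_N\colon\mathcal{K}_N\setminus\mathcal{D}_N\to[0,N]\setminus\mathbb{Q}$ is \emph{injective}; you implicitly use surjectivity as well, which follows from the stated surjectivity of $h_N\colon\mathcal{K}_N\to[0,N]$ together with the fact that preimages of irrationals cannot lie in $\mathcal{D}_N$---worth saying explicitly. Second, in your openness argument the limit point $w^*$ of the $w_n$ lies a priori only in the compact set $\mathcal{K}_N$; you should note that $w^*\notin\mathcal{D}_N$ because $h_N(w^*)=y_0$ is irrational, before invoking injectivity to conclude $w^*=w_0$. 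Both points are routine but would tighten the exposition.
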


To prove Theorem~\ref{M0}, we require a further modification of the Arnold family. The following lemma relates the rotation numbers of the modified system to those of $f_{\beta,n}$. See \cite[Chapter XI.2]{H11} for more general argument.

\begin{Lemma}\label{Yro}
Fix $\ka>0$. Given $\beta\in\R$, let
\[
g_{n}(x): = x + \beta + \frac{1}{n^{\ka+1}} \sin(2\pi n x).
\]
Then  \( \rho(g_{n}) = \frac{1}{n} \rho(f_{\beta,n}) \), where $\rho(f_{\beta,n}) $ is defined by (\ref{armod}).
\end{Lemma}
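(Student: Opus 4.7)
The plan is to exploit a scaling semiconjugacy between $g_n$ and $f_{\beta,n}$ via the dilation $x \mapsto nx$. First I would verify by direct substitution that
\[
n\, g_n(x) = nx + n\beta + \frac{1}{n^\kappa}\sin(2\pi n x) = f_{\beta,n}(nx),
\]
so that $g_n(x) = \tfrac{1}{n}\, f_{\beta,n}(nx)$. This identity also confirms that $g_n$ is a valid lift of an orientation-preserving circle diffeomorphism under the hypothesis $n^\kappa > 2\pi$ already imposed on $f_{\beta,n}$, since then $|g_n'(x) - 1| \leq \frac{2\pi}{n^\kappa} < 1$, and one checks $g_n(x+1) = g_n(x) + 1$ directly from $\sin(2\pi n(x+1)) = \sin(2\pi n x)$.

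Next, by induction on $k \geq 1$, I would establish the iterate relation $g_n^k(x) = \tfrac{1}{n}\, f_{\beta,n}^k(nx)$. The base case is the identity above; for the inductive step, assuming the claim at level $k-1$,
\[
g_n^k(x) = g_n\!\left(\tfrac{1}{n}\, f_{\beta,n}^{k-1}(nx)\right) = \tfrac{1}{n}\, f_{\beta,n}\!\left(n \cdot \tfrac{1}{n}\, f_{\beta,n}^{k-1}(nx)\right) = \tfrac{1}{n}\, f_{\beta,n}^k(nx).
\]

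Finally, appealing to the definition $\rho(g_n) = \lim_{k\to\infty}(g_n^k(x) - x)/k$, which exists and is independent of $x \in \mathbb{R}$ for any lift of a circle homeomorphism, I would substitute the iterate relation and pass to the limit along the subsequence at the point $nx$:
\[
\rho(g_n) = \lim_{k\to\infty} \frac{\tfrac{1}{n}\, f_{\beta,n}^k(nx) - x}{k} = \frac{1}{n}\lim_{k\to\infty} \frac{f_{\beta,n}^k(nx) - nx}{k} = \frac{1}{n}\,\rho(f_{\beta,n}).
\]
There is essentially no genuine obstacle here: the argument reduces to the single algebraic identity $g_n(x) = \tfrac{1}{n}\, f_{\beta,n}(nx)$, whose precise form was engineered by the choice of coefficient $1/n^{\kappa+1}$ paired with the internal frequency $2\pi n$ in the definition of $g_n$.
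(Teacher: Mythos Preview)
Your proof is correct and follows essentially the same approach as the paper: both exploit the scaling $y=nx$ to relate the orbits of $g_n$ and $f_{\beta,n}$, with the paper phrasing this as the substitution $y_m=nx_m$ in the orbit recursion while you phrase it as the semiconjugacy $g_n(x)=\tfrac{1}{n}f_{\beta,n}(nx)$ and iterate by induction. The content is the same; your version is slightly more explicit in verifying that $g_n$ is a genuine lift.
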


\begin{proof}

	Fix \( x_0 \in (0,1) \) and consider the orbit \( \{x_m\}_{m=0}^{\infty} \) where \( x_m = g_{n}^m(x_0) \). Then
\[
x_{m+1} = x_m + \beta + \frac{1}{n^{\ka+1}} \sin(2\pi n x_m),
\]
which implies
\[
n x_{m+1} = n x_m + n\beta + \frac{1}{n^{\ka}} \sin(2\pi n x_m).
\]
Let \( y_m = n x_m \). Then \( \rho(g_{n}) = \lim_{m\to\infty} \frac{x_m}{m} = \frac{1}{n} \lim_{m\to\infty} \frac{y_m}{m} \), where \( \{y_m\} \) satisfies
\[
y_{m+1} = y_m + n\beta + \frac{1}{n^{\ka}} \sin(2\pi y_m).
\]
Define
\begin{equation}\label{fbex}
f_{n}(x): = x + n\beta + \frac{1}{n^{\ka}} \sin(2\pi x),
\end{equation}
which implies
\( \rho(f_{n}) = n \rho(g_{n}) \).	
	\end{proof}

\section{\sc Proof of Theorem \ref{M0}}
		We prove Theorem \ref{M0} by establishing the following  proposition.

\begin{Proposition}\label{SSR}
Let $\iota\in\N$ with $\iota\geq 2$. Fix $0<\varepsilon\ll 1$. For each  $\delta>0$, $M>0$, and $\alpha\in(0,1)$, there exists a twist map $F_\beta^\phi(x,y) = (x + \beta + y + \phi(x), y + \phi(x))$ such that $\|\phi\|_{C^{\iota-\varepsilon}} < \delta$, $\|\phi\|_{C^\iota} > M$, and $F_\beta^\phi$ admits an invariant circle with rotation number $\alpha$.
\end{Proposition}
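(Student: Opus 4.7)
The plan is to apply Herman's correspondence (Lemma~\ref{herob}) to reduce the construction of an invariant graph to the construction of a suitable real-analytic circle diffeomorphism, drawn from a high-frequency Arnold-type family in the spirit of Lemma~\ref{Yro}. Fix $\ka := \iota - 1 - \eps/2$, which lies in $(\iota - 1 - \eps,\ \iota - 1)$, and for each sufficiently large $n\in\N$ (so that $n^\ka > 2\pi$) set
\[
\psi_n(x) := \frac{1}{n^{\ka+1}} \sin(2\pi n x), \qquad g_{\beta,n}(x) := x + \beta + \psi_n(x) \in D^\omega(\T).
\]
Define $\phi_{\beta,n} := g_{\beta,n} + g_{\beta,n}^{-1} - 2\Id = \psi_n - \psi_n \circ g_{\beta,n}^{-1}$. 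By Lemma~\ref{herob}, the twist map $F_\beta^{\phi_{\beta,n}}$ then automatically admits a $C^\omega$ invariant graph, on which the dynamics is conjugate to $g_{\beta,n}$ and hence has rotation number $\rho(g_{\beta,n})$.

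Three verifications remain. First, the zero-mean condition $\int_\T \phi_{\beta,n}\,dx = 0$ follows from $\int_\T \psi_n = 0$ together with the change of variables $y = g_{\beta,n}^{-1}(x)$, which transforms $\int_\T \psi_n \circ g_{\beta,n}^{-1}\,dx$ into $\int_0^1 \psi_n(y)(1+\psi_n'(y))\,dy = 0$ by periodicity. Second, since $\|\psi_n\|_{C^k} \sim (2\pi)^k n^{k-\ka-1}$ and $(g_{\beta,n}^{-1})'(x) = 1 + O(n^{-\ka})$, a routine chain-rule estimate gives
\[
\|\phi_{\beta,n}\|_{C^{\iota-\eps}} = O(n^{\iota-\eps-\ka-1}) = O(n^{-\eps/2}) \to 0,
\]
while the dominant contribution to $\phi_{\beta,n}^{(\iota)}(x)$ is $\psi_n^{(\iota)}(x) - \psi_n^{(\iota)}(x-\beta)$, with corrections of size $O(n^{\iota-2\ka-1}) = O(n^{\eps-\iota+1})$; a sum-to-product identity for $\sin$ then yields
\[
\|\phi_{\beta,n}\|_{C^\iota} \ \geq \ c_\iota\, n^{\eps/2}\,|\sin(\pi n \beta)| \ -\ C\, n^{\eps-\iota+1}.
\]
Third, $\beta \mapsto \rho(g_{\beta,n})$ is continuous and non-decreasing on $[0,1]$, satisfies $\rho(g_{0,n})=0$ and $\rho(g_{1,n})=1$, and obeys $|\rho(g_{\beta,n}) - \beta| \leq \|\psi_n\|_\infty \leq n^{-\ka-1}$; hence for any prescribed $\alpha \in (0,1)$ there exists $\beta_n \in [0,1]$ with $\rho(g_{\beta_n,n}) = \alpha$ and $|\beta_n - \alpha| \leq n^{-\ka-1}$.

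The only delicate point is ensuring $\|\phi_{\beta_n,n}\|_{C^\iota} > M$, which by the preceding lower bound requires $|\sin(\pi n \beta_n)|$ to stay bounded below by some $c>0$ along a subsequence $n \to \infty$. Since $n\beta_n = n\alpha + O(n^{-\ka})$, this reduces to producing arbitrarily large $n$ with $\|n\alpha\|_\T \geq c$. For irrational $\alpha$, Weyl equidistribution of $\{n\alpha\}$ shows that a positive density of $n$ satisfy $\|n\alpha\|_\T \geq 1/4$; for rational $\alpha = p/q$, every $n$ coprime to $q$ satisfies $\|n\alpha\|_\T \geq 1/q$. Along such a subsequence and for $n$ chosen sufficiently large, both $\|\phi_{\beta_n,n}\|_{C^{\iota-\eps}} < \delta$ and $\|\phi_{\beta_n,n}\|_{C^\iota} > M$ hold, completing the proof. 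The principal obstacle is thus purely arithmetic—securing a uniform lower bound on $\|n\beta_n\|_\T$—rather than analytic, once the chain-rule estimates and the Arnold-family rotation-number control are in place.
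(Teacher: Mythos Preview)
Your proposal is correct and follows the same overall architecture as the paper: one takes the high-frequency modified Arnold family $g_{\beta,n}(x)=x+\beta+n^{-(\ka+1)}\sin(2\pi nx)$, sets $\phi_{\beta,n}=g_{\beta,n}+g_{\beta,n}^{-1}-2\Id$, invokes Lemma~\ref{herob}, and then carries out the Fa\`a di Bruno/chain-rule estimates to show $\|\phi_{\beta,n}\|_{C^{\iota-\eps}}\to 0$ while $\|\phi_{\beta,n}^{(\iota)}\|_\infty \gtrsim n^{\eps/2}|\sin(\pi n\beta)|$. Your choice $\ka=\iota-1-\eps/2$ corresponds exactly to the paper's exponent $\iota-\nu$ with $\nu=\eps/2$, and the error exponents you record match those in the paper.

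The one genuine simplification is in how you locate $\beta$ with $\rho(g_{\beta,n})=\alpha$ and simultaneously control $|\sin(\pi n\beta)|$. The paper passes through Lemma~\ref{Yro}, rewriting $\rho(g_{\beta,n})=\tfrac{1}{n}\rho(f_{\beta,n})$ for the classical Arnold family $f_{\lambda,\sigma}$, and then argues via the Arnold-tongue geometry (the sets $S_\gamma$ and the infimum $d$) to select $\beta$ with $\|n\beta\|$ bounded away from $0$. You instead use the elementary bound $|\rho(g_{\beta,n})-\beta|\le\|\psi_n\|_\infty\le n^{-\ka-1}$ together with the intermediate value theorem, which pins $\beta_n$ to within $n^{-\ka-1}$ of $\alpha$; the arithmetic condition $\|n\beta_n\|\ge c$ then reduces to $\|n\alpha\|\ge c$ along a subsequence, handled by equidistribution (irrational $\alpha$) or periodicity (rational $\alpha$). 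This bypasses Lemma~\ref{Yro} and Proposition~\ref{DW} entirely and is somewhat more transparent; the paper's route, on the other hand, makes the connection to the classical Arnold family explicit, which is what is needed later for Theorem~\ref{M01} via Lemma~\ref{Yhax}.
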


\vspace{1ex}

According to \emph{Fa\`{a} di Bruno's formula}, the $n$-th derivative of a composite function is given by:
\begin{align*}
\frac{d^n}{dx^n} f(g(x)) = \sum_{\sum_{k=1}^n k m_k = n} \frac{n!}{m_1!\,m_2!\,\cdots\,m_n!}
\cdot f^{(m_1 + \cdots + m_n)}(g(x))
\cdot \prod_{k=1}^n \left(\frac{g^{(k)}(x)}{k!}\right)^{m_k}.
\end{align*}

Fix \( 0 < \varepsilon < \frac{1}{2} \) and choose \( 0 < \nu < \varepsilon \). Define
\[
g_{\beta,n}(x) := x + \beta + \frac{1}{n^{\iota-\nu}} \sin(2\pi n x),
\]
\[
\phi_{\beta,n}(x) := g_{\beta,n}(x) + g_{\beta,n}^{-1}(x) - 2x.
\]
Then we obtain:
\[
\phi_{\beta,n}(x) = \frac{1}{n^{\iota-\nu}}\left[\sin(2\pi n x) - \sin\left(2\pi n g_{\beta,n}^{-1}(x)\right)\right].
\]
We will prove the estimate
\[
\|\phi_{\beta,n}\|_{C^{\iota-\varepsilon}} = O\left(\frac{1}{n^{\varepsilon - \nu}}\right),
\]
so that for sufficiently large \( n \), we have \( \|\phi_{\beta,n}\|_{C^{\iota-\varepsilon}} < \delta \).

Furthermore, we will show that \( \|\phi_{\beta,n}\|_{C^\iota} \) admits a positive lower bound. This allows us to construct a twist map \( F_\beta^{\phi_{\beta,n}} \) with an invariant circle of any prescribed rotation number, such that \( \|F_\beta^{\phi_{\beta,n}} - F_\beta^0\|_{C^\iota} \) is not small, where \( F_\beta^0 \) is the integrable system \( F_\beta^0(x,y) = (x + y + \beta, y) \).

We denote $D^n:=\frac{d^n}{dx^n}$ for simplicity.  Differentiating $g_{\beta,n}$ for  $m$ times yields:
\begin{align*}
Dg_{\beta,n}(x) &= 1 + \frac{2\pi}{n^{\iota-\nu-1}}\cos(2\pi n x) = O(1), \\
D^{m}g_{\beta,n}(x) &= \frac{(2\pi)^m}{n^{\iota-\nu-m }}\sin\left(2\pi n x + \frac{m\pi}{2}\right) = O\left(\frac{1}{n^{\iota-\nu-m}}\right), \quad \forall m \in \mathbb{Z}_{\geq 2}.
\end{align*}
Since $g_{\beta,n}$ is invertible and differentiable, and
\[
x = g_{\beta,n}^{-1}(x) + \beta + \frac{1}{n^{\iota-\nu}}\sin\left(2\pi n g_{\beta,n}^{-1}(x)\right),
\]
we differentiate $g_{\beta,n}^{-1}$ twice:
\begin{align*}
Dg_{\beta,n}^{-1}(x) &= \frac{1}{1 + \frac{2\pi}{n^{\iota-\nu-1}}\cos\left(2\pi n g_{\beta,n}^{-1}(x)\right)} = O(1), \\
D^{2}g_{\beta,n}^{-1}(x) &= \frac{(2\pi)^{2}}{n^{\iota-\nu-2}} \cdot \sin\left(2\pi n g_{\beta,n}^{-1}(x)\right) \cdot \left(Dg_{\beta,n}^{-1}(x)\right)^{3} = O\left(\frac{1}{n^{\iota-\nu-2}}\right).
\end{align*}
Then we compute:
\begin{align*}
D\phi_{\beta,n}(x) &= Dg_{\beta,n}(x) + Dg_{\beta,n}^{-1}(x) - 2 \\
&= \frac{2\pi}{n^{\iota-\nu-1}}\cos(2\pi n x) + \frac{1}{1 + \frac{2\pi}{n^{\iota-\nu-1}}\cos\left(2\pi n g_{\beta,n}^{-1}(x)\right)} - 1 \\
&= O\left(\frac{1}{n^{\iota-\nu-1}}\right).
\end{align*}
For \( m \geq 2 \), differentiating \( \phi_{\beta,n} \) $m$ times gives:
\begin{align*}
D^{m}\phi_{\beta,n}(x) = \frac{(2\pi)^m}{n^{\iota-\nu-m }}\sin\left(2\pi n x + \frac{m\pi}{2}\right) - \frac{1}{n^{\iota-\nu }}\frac{d^m}{dx^m}\sin\left(2\pi n g_{\beta,n}^{-1}(x)\right).
\end{align*}

By \emph{Fa\`{a} di Bruno's formula},
\[
D^m\sin\bigl(2\pi n g_{\beta,n}^{-1}(x)\bigr)
= \sum_{\sum k b_k = m} \frac{m!}{\prod b_k!}
(2\pi n)^{B} \sin\Bigl(2\pi n g_{\beta,n}^{-1}(x) + \frac{B\pi}{2}\Bigr)
\prod_{i=1}^m \left(\frac{D^{i}g_{\beta,n}^{-1}(x)}{i!}\right)^{b_i},
\]
where $B = b_1 + \cdots + b_m$.

From $g_{\beta,n}(g_{\beta,n}^{-1}(x)) = x$, for $m \geq 2$:
\[
0 = D^mg_{\beta,n}(g_{\beta,n}^{-1}(x))
= \sum_{\sum k b_k = m} \frac{m!}{\prod b_k!}
D^{B}g_{\beta,n}(g_{\beta,n}^{-1}(x))
\prod_{i=1}^m \left(\frac{D^{i}g_{\beta,n}^{-1}(x)}{i!}\right)^{b_i}.
\]
Isolating the $b_m = 1$ term:
\[
D^{m}g_{\beta,n}^{-1}(x) = -\frac{\sum_{\sum k b_k = m}^{b_m \neq 1} \frac{m!}{\prod b_k!}
D^{B}g_{\beta,n}(g_{\beta,n}^{-1}(x))
\prod_{i=1}^m \left(\frac{D^{i}g_{\beta,n}^{-1}(x)}{i!}\right)^{b_i}}{Dg_{\beta,n}(g_{\beta,n}^{-1}(x))}.
\]

By induction, for $m \geq 2$:
\begin{align*}
D^{m}g_{\beta,n}^{-1}(x) &= -\frac{(2\pi)^m}{n^{\iota-\nu-m}}\sin\Bigl(2\pi n g_{\beta,n}^{-1}(x)+\frac{m\pi}{2}\Bigr)(Dg_{\beta,n}^{-1}(x))^{m+1}
+ O\left(\frac{1}{n^{2\iota-2\nu-m-1}}\right)\\
&= O\left(\frac{1}{n^{\iota-\nu-m}}\right).
\end{align*}

Thus,
\begin{align*}
D^{m}\phi_{\beta,n}(x) &= \frac{(2\pi)^m}{n^{\iota-\nu-m}}\left[\sin\Bigl(2\pi n x + \frac{m\pi}{2}\Bigr)
- \sin\Bigl(2\pi n g_{\beta,n}^{-1}(x)+\frac{m\pi}{2}\Bigr)(Dg_{\beta,n}^{-1}(x))^m\right] \\
&\quad + O\left(\frac{1}{n^{\iota-m+1-2\nu}}\right).
\end{align*}

Using the H\"{o}lder estimate $[f]_\theta \leq 2\|f\|_\infty^{1-\theta}\|f'\|_\infty^\theta$ for $\theta \in (0,1)$:
\begin{align*}
\|\phi_{\beta,n}\|_{C^{\iota-\varepsilon}} &= \sum_{i=0}^{\iota-1}\|\phi_{\beta,n}^{(i)}\|_\infty + [\phi_{\beta,n}^{(\iota-1)}]_{1-\varepsilon} \\
&\leq O\left(\frac{1}{n^{1-\nu}}\right) + 2\|\phi_{\beta,n}^{(\iota-1)}\|_\infty^\varepsilon \|\phi_{\beta,n}^{(\iota)}\|_\infty^{1-\varepsilon} \\
&= O\left(\frac{1}{n^{1-\nu}}\right) + O\left(\frac{1}{n^{(1-\nu)\varepsilon}}\right)O\left(\frac{1}{n^{-\nu(1-\varepsilon)}}\right) \\
&= O\left(\frac{1}{n^{\varepsilon-\nu}}\right) \xrightarrow[n\to\infty]{} 0 \quad (\varepsilon > \nu).
\end{align*}
Define
\[
\Delta := \frac{\cos\left(2\pi n g_{\beta,n}^{-1}(x)\right)}{n^{\iota-\nu-1}} = O\left(\frac{1}{n^{\iota-\nu-1}}\right).
\]
Then
\begin{align*}
D^{\iota}\phi_{\beta,n}(x)
&= (2\pi)^{\iota} n^{\nu} \left[\sin\left(2\pi n x + \frac{\iota\pi}{2}\right)
- \frac{\sin\left(2\pi n g_{\beta,n}^{-1}(x) + \frac{\iota\pi}{2}\right)}{(1+\Delta)^{\iota}}\right]
+ O\left(\frac{1}{n^{1-2\nu}}\right) \\
&= (2\pi)^{\iota} n^{\nu} \left[\sin\left(2\pi n x + \frac{\iota\pi}{2}\right)
- \sin\left(2\pi n g_{\beta,n}^{-1}(x) + \frac{\iota\pi}{2}\right)\right]
+ O\left(\frac{1}{n^{1-2\nu}}\right) \\
&= (2\pi)^{\iota} n^{\nu} \cdot 2\cos\left[\pi n (x+g_{\beta,n}^{-1}(x)) + \frac{\iota\pi}{2}\right]
\sin\left[\pi n (x-g_{\beta,n}^{-1}(x))\right]
+ O\left(\frac{1}{n^{1-2\nu}}\right).
\end{align*}

Let $y := 2\pi n g_{\beta,n}^{-1}(x)$. Since
\[
x = g_{\beta,n}^{-1}(x) + \beta + \frac{1}{n^{\iota-\nu}}\sin\left(2\pi n g_{\beta,n}^{-1}(x)\right),
\]
we have
\begin{align*}
D^{\iota}\phi_{\beta,n}(x)
&= (2\pi)^{\iota} n^{\nu} \cdot 2\cos\left(y + \pi n \beta + \frac{\iota\pi}{2}\right) \sin\left(\pi n \beta\right)
+ O\left(\frac{1}{n^{1-2\nu}}\right).
\end{align*}

Therefore,
\begin{align*}
\left\|\phi_{\beta,n}\right\|_{C^{\iota}}
&= \sum_{i=0}^{\iota}\left\|\phi_{\beta,n}^{(i)}\right\|_\infty
= O\left(\frac{1}{n^{1-\nu}}\right)
+ \left\|(2\pi)^{\iota} n^{\nu} \cdot 2\cos\left(y + \pi n \beta + \frac{\iota\pi}{2}\right) \sin\left(\pi n \beta\right)
+ O\left(\frac{1}{n^{1-2\nu}}\right)\right\|_\infty \\
&= 2(2\pi)^{\iota} n^{\nu} \left|\sin(\pi n \beta)\right| + O\left(\frac{1}{n^{1-2\nu}}\right)
\succcurlyeq (2\pi)^{\iota} n^{\nu} \left|\sin(\pi n \beta)\right|.
\end{align*}

For $M > 0$, define
\begin{align*}
\Lambda_M &= \left\{ (\beta, n) \in \mathbb{R} \times \mathbb{N} : \|\phi_{\beta,n}\|_{C^{\iota}} > M \right\}, \\
\mathcal{C}_M &= \left\{ \rho(g_{\beta,n}) \bmod 1 : (\beta, n) \in \Lambda_M \right\}.
\end{align*}
where  we introduce the  asymptotic notation:
 \( A_n(x) \succcurlyeq B_n(x) \) if for every \( \delta > 0 \), there exists \( N = N(\delta) > 0 \) such that for all \( n > N \) and all \( x \in \mathbb{R} \), we have \( |A_n(x)| > |B_n(x)| - \delta \).

The following result is well known.
\begin{Lemma} \label{YRX}
For any $\gamma \in \mathbb{R} \setminus \mathbb{Q}$, the sequence $\{\{n\gamma\}\}_{n=1}^{\infty}$ is dense in $(0,1)$, where $\{x\} = x \bmod 1$.
\end{Lemma}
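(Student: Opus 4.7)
The statement is the classical Kronecker density theorem for irrational rotations, so the plan is to give the standard Dirichlet-style pigeonhole argument, adapted to the specific formulation here.

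The plan is to fix an arbitrary $\eps > 0$ and show that for every $x \in (0,1)$ there exists $n \in \mathbb{N}$ with $|\{n\gamma\} - x| < \eps$. The first step is to produce a small approximation: choose $N > 1/\eps$ and partition $[0,1)$ into $N$ half-open intervals of length $1/N < \eps$. The $N+1$ points $\{\gamma\}, \{2\gamma\}, \ldots, \{(N+1)\gamma\}$ all lie in $[0,1)$, and by the pigeonhole principle two of them, say $\{k\gamma\}$ and $\{\ell\gamma\}$ with $1 \le k < \ell \le N+1$, lie in the same subinterval. Setting $m := \ell - k \in \{1, \ldots, N\}$, one gets $\|m\gamma\| < 1/N < \eps$, where $\|\cdot\|$ denotes distance to the nearest integer. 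Since $\gamma$ is irrational, $\|m\gamma\|$ is strictly positive.

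The second step is to propagate this small approximation to fill $(0,1)$ densely. Let $\eta := m\gamma - [m\gamma + 1/2]$ so that $|\eta| = \|m\gamma\| \in (0, \eps)$. Consider the arithmetic progression $\{jm\gamma\}_{j \ge 1}$: modulo $1$ this coincides with $\{j\eta\}$, which advances by the fixed step $|\eta| < \eps$ (wrapping around $[0,1)$). Consequently, for any $x \in (0,1)$ one can find $j \in \mathbb{N}$ such that $\{jm\gamma\}$ lies within $\eps$ of $x$. Taking $n := jm$ produces the required index.

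I do not expect any real obstacle here; the only small subtlety is the orientation of $\eta$ (whether $\{m\gamma\}$ is close to $0$ or close to $1$), which is handled uniformly by passing to the signed representative $\eta$. Since $\eps > 0$ and $x \in (0,1)$ were arbitrary, this shows $\{\{n\gamma\}\}_{n \in \mathbb{N}}$ is dense in $(0,1)$, completing the proof. No deeper input (e.g.\ Weyl equidistribution or continued fractions) is needed, which keeps the lemma self-contained and in line with its role as a technical ingredient in the main argument.
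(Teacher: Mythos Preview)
Your argument is correct: the Dirichlet pigeonhole step produces $m$ with $0<\|m\gamma\|<\eps$, and the arithmetic progression $\{jm\gamma\}$ then $\eps$-fills $(0,1)$; the sign of $\eta$ only changes the direction of travel and causes no trouble. The paper does not actually prove this lemma---it simply records it as a well-known fact---so there is no alternative approach to compare against, and your self-contained proof is entirely appropriate.
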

We now show that for every \( M > 0 \), the set \( \mathcal{C}_M \) contains all numbers in \( (0,1) \).

Consider
\[
g_{\beta,n}(x) = x + \beta + \frac{1}{n^{\iota-\nu}} \sin(2\pi n x).
\]
We analyze the rotation number \( \rho(g_{\beta,n}) \) and relate \( \|D^{\iota}\phi_{\beta,n}\|_{C^0} \) to it. Let
\begin{equation}\label{fbe}
f_{\beta,n}(x) = x + n\beta + \frac{1}{n^{\iota-\nu-1}} \sin(2\pi x).
\end{equation}

By Lemma \ref{Yro}, $\rho(f_{\beta,n}) = n\rho(g_{\beta,n})$. Note that \eqref{fbe} resembles the Arnold family \eqref{arnf} with parameters \( (\lambda, \sigma) = \left(n\beta, \frac{2\pi}{n^{\iota-\nu-1}}\right) \).

\begin{Proposition} \label{DW}
For every \( M > 0 \), we have \( \mathcal{C}_M = (0,1) \).
\end{Proposition}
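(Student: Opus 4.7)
The plan is as follows. Given any $\alpha\in(0,1)$ and any $M>0$, I want to exhibit a pair $(\beta,n)\in\Lambda_M$ with $\rho(g_{\beta,n})\equiv\alpha\pmod 1$. Since the lower bound
\[
\|\phi_{\beta,n}\|_{C^\iota}\succcurlyeq (2\pi)^\iota n^\nu\,|\sin(\pi n\beta)|
\]
has already been derived, the task will reduce to producing, along some subsequence $n_k\to\infty$, a parameter $\beta_{n_k}$ satisfying $\rho(g_{\beta_{n_k},n_k})\equiv\alpha\pmod 1$ together with $|\sin(\pi n_k\beta_{n_k})|\ge c(\alpha)>0$; then $\|\phi_{\beta_{n_k},n_k}\|_{C^\iota}\succcurlyeq n_k^\nu\to\infty$ would eventually dominate $M$.

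For the first requirement I would use that the map $\beta\mapsto\rho(g_{\beta,n})$ is continuous and satisfies $\rho(g_{\beta+1,n})=\rho(g_{\beta,n})+1$, so it descends to a continuous, monotone, degree-one self-map of $\mathbb{T}$; in particular it is surjective, and for each $n$ one can pick $\beta_n\in[0,1)$ with $\rho(g_{\beta_n,n})\equiv\alpha\pmod 1$. To control where $\beta_n$ lies, I would apply the classical $C^0$-stability estimate for rotation numbers to the perturbation $g_{\beta,n}-R_\beta=n^{-(\iota-\nu)}\sin(2\pi n\,\cdot)$, obtaining $|\rho(g_{\beta,n})-\beta|\le n^{-(\iota-\nu)}$. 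Hence $|\beta_n-\alpha|\le n^{-(\iota-\nu)}\pmod 1$, and consequently
\[
|n\beta_n-n\alpha|=O\!\bigl(n^{-(\iota-\nu-1)}\bigr)\xrightarrow[n\to\infty]{}0,
\]
since $\iota\ge 2$ and $0<\nu<\varepsilon<1/2$.

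The key step is then to select a subsequence $n_k$ along which $\{n_k\alpha\}$ is uniformly bounded away from $0$ and $1$. If $\alpha\notin\mathbb{Q}$, Lemma~\ref{YRX} gives density of $\{n\alpha\}$ in $(0,1)$, and I would arrange $\{n_k\alpha\}\to 1/2$; the previous approximation then forces $\{n_k\beta_{n_k}\}\to 1/2$ and $|\sin(\pi n_k\beta_{n_k})|\to 1$. If instead $\alpha=p/q$ in lowest terms, then for any $n$ with $q\nmid n$ one has $\{n\alpha\}=(np\bmod q)/q\in[1/q,\,1-1/q]$, so for all sufficiently large such $n$ the approximation gives $\{n\beta_n\}\in[1/(2q),\,1-1/(2q)]$ and $|\sin(\pi n\beta_n)|\ge\sin(\pi/(2q))>0$. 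In either case the required pointwise lower bound holds along $n_k$, which proves the nontrivial inclusion $\mathcal{C}_M\supseteq(0,1)$; the reverse inclusion is immediate since rotation numbers reduce modulo $1$.

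The step I expect to be the most delicate is the rational case, because for $\alpha=p/q$ the set of $\beta$ with $\rho(g_{\beta,n})=\alpha$ is an entire Arnold tongue of positive width rather than a single point, so the selection of $\beta_n$ is far from unique. Nevertheless, every point of such a tongue still lies within $O(n^{-(\iota-\nu)})$ of $\alpha$ by the $C^0$ rotation-number estimate, so the argument applies uniformly to any admissible $\beta_n$. The only other technical ingredient is the strict positivity $\nu>0$ baked into the construction at the start of the section, which is what guarantees $n^\nu\to\infty$ and hence that any prescribed threshold $M$ is eventually surpassed.
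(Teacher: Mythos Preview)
Your argument is correct and takes a genuinely different route from the paper's proof.

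The paper argues by first invoking Lemma~\ref{Yro} to rescale $g_{\beta,n}$ to the standard Arnold family $f_{\lambda,\sigma}$ with $(\lambda,\sigma)=(n\beta,\,2\pi n^{-(\iota-\nu-1)})$, and then uses the level-set geometry of the tongues: it defines $S_\gamma=\{(\lambda,\sigma):\rho(f_{\lambda,\sigma})=\gamma\}$, $d_\gamma=\inf_{S_\gamma}|\lambda|$, and $d=\inf_{\gamma\in I_0}d_\gamma>0$ for a fixed window $I_0\approx(1/3,2/3)$, in order to guarantee $|n\beta|\ge d$ once $\|n\alpha\|\in I_0$. By contrast, you bypass Lemma~\ref{Yro} and the Arnold-tongue apparatus entirely: the elementary $C^0$ stability estimate $|\rho(g_{\beta,n})-\beta|\le\|g_{\beta,n}-R_\beta\|_{C^0}=n^{-(\iota-\nu)}$ already pins any admissible $\beta_n$ to within $n^{-(\iota-\nu)}$ of $\alpha$, hence $n\beta_n$ to within $n^{-(\iota-\nu-1)}=o(1)$ of $n\alpha$. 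From there your selection of $n_k$ with $\{n_k\alpha\}$ bounded away from $\{0,1\}$ (via density for irrational $\alpha$, via $q\nmid n$ for $\alpha=p/q$) finishes the job exactly as in the paper, but with less machinery. Your treatment of the rational case is also tidier: rather than minimizing $|\|kp/q\|-1/2|$ as the paper does, you simply observe that $q\nmid n$ forces $\{n\alpha\}\in[1/q,1-1/q]$, which is all that is needed. The paper's detour through $f_{\beta,n}$ earns its keep later in Theorem~\ref{M01}, where Herman's genericity result for the Arnold family is essential; but for Proposition~\ref{DW} in isolation, your direct $C^0$ estimate is the shorter path.
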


\begin{proof}
For any \( \gamma \in \mathbb{R} \setminus \mathbb{Z} \), define
\[
S_\gamma = \left\{ (\lambda, \sigma) \in \mathbb{R} \times [0, 2\pi] : \rho(f_{\lambda,\sigma}) = \gamma \right\}, \quad
d_\gamma = \inf_{(\lambda,\sigma) \in S_\gamma} |\lambda|, \quad
d = \inf_{\gamma \in I_0} d_\gamma,
\]
where
\[
I_0 := \left(\frac{1}{3} - 10^{-6}, \frac{2}{3} + 10^{-6}\right).
\]
Since $\rho(f_{\lambda,\sigma})$ takes all values in $[0,1]$ as $\lambda$ varies, we have \( 0 < d < \frac{1}{2} \).

Recall the induced flat metric on \( \mathbb{T} \) is defined as
\[
\|\cdot\| := \inf_{p \in \mathbb{Z}} |\cdot + p|.
\]
For any \( \alpha \in (0,1) \), choose
\[
n > \left( \frac{2M}{(2\pi)^{\iota} |\sin(d\pi)|} \right)^{1/\nu}
\]
such that $\|n\alpha\| \in I_0$. Such an \( n \) exists:

- If \( \alpha \) is irrational, by Lemma \ref{YRX}, there exists a strictly increasing sequence \( \{s_t\} \subset \mathbb{N} \) such that \( \left| \|s_t \alpha\| - \frac{1}{2} \right| < \frac{1}{n} \).

- If \( \alpha = \frac{p}{q} \) is rational (with \( p, q \) coprime), let \( s_t = q t + k_\alpha \), where \( k_\alpha \in \{1, \dots, q\} \) minimizes
\[
\mathcal{A}(k) := \left| \left\|k \frac{p}{q}\right\| - \frac{1}{2} \right|.
\]
For large \( t \), we have
\[
s_t > \left( \frac{2M}{(2\pi)^{\iota} |\sin(d\pi)|} \right)^{1/\nu}.
\]

Let \( \gamma = n\alpha \). Since $\rho(f_{\cdot,\sigma}): \mathbb{R} \to [0,1]$ is surjective, we can choose \( \beta \) such that
\[
\left(n\beta, \frac{2\pi}{n^{\iota-\nu-1}}\right) \in S_\gamma.
\]
Then \( \rho(f_{\beta,n}) = \gamma \), so
\[
\rho(g_{\beta,n}) = \frac{\rho(f_{\beta,n})}{n} = \frac{\gamma}{n} = \alpha.
\]
Moreover,
\[
\|D^{\iota}\phi_{\beta,n}\|_{C^0} \succcurlyeq (2\pi)^{\iota} n^{\nu} |\sin(n\beta\pi)| > (2\pi)^{\iota} \cdot \frac{2M}{(2\pi)^{\iota} |\sin(d\pi)|} \cdot |\sin(d\pi)| = 2M.
\]
Hence, \( (\beta, n) \in \Lambda_M \), so \( \alpha \in \mathcal{C}_M \). Since \( \alpha \in (0,1) \) was arbitrary, we conclude \( \mathcal{C}_M = (0,1) \).
\end{proof}

This completes the proof of Proposition \ref{DW}.
	\vspace{1em}

		\section{\sc Proof of Theorem \ref{M01}}		

We now prove the theorem based on Lemmas~\ref{resll}, \ref{Yhax}, and~\ref{Yhamdx}. Fix $\kappa > 0$ and define
\begin{equation}\label{N_0}
N_0 := \min\{N \in \mathbb{N} \mid N^\kappa > 2\pi\}.
\end{equation}
Recall the modified Arnold family defined in \eqref{armod}:
\[
f_{\beta,n}(x) := x + n\beta + \frac{1}{n^{\kappa}} \sin(2\pi x).
\]
For all $N \geq N_0$, the map $f_{\beta,N}$ is a diffeomorphism.

Note that
\[
\mathcal{K}_N = \bigcup_{k=0}^{N-1} (k + K_N).
\]
Define
\[
\mathcal{G}_N := \bigcup_{k=0}^{N-1} (k + G_N).
\]
By Lemmas~\ref{resll} and~\ref{Yhax}, the set $\mathcal{G}_N$ is residual in $\mathcal{K}_N$.
Lemma~\ref{Yhax} implies that for each $\beta \in \frac{1}{N} G_N$, the map $f_{\beta,N}$ is singular conjugate to $R_\alpha$ with $\alpha = h_N(\beta)$. To complete the proof, we establish the following:

\begin{itemize}
\item \textbf{Assertion:} For each $\beta \in \frac{1}{N} \mathcal{G}_N$, the map $f_{\beta,N}$ is singular conjugate to $R_\alpha$ with $\alpha = h_N(\beta)$.
\end{itemize}

To prove this, we recall notation introduced by Herman. Let $\varphi : \mathbb{T} \to \mathbb{R}$ be measurable with respect to the Lebesgue-Haar measure $m$ on $\mathbb{T}$, and define
\[
\delta(\varphi, 0) = \int_{\mathbb{T}} \frac{|\varphi|}{1 + |\varphi|}  dm.
\]
Define the functional $\mathcal{N} : D^1(\mathbb{T}) \to \mathbb{R}_+$ by
\[
\mathcal{N}(f) = \inf_{n \geq 1} \delta\left( \frac{1}{n} \sum_{i=0}^{n-1} Df^i, 0 \right).
\]
Then $\mathcal{N}$ is upper semicontinuous in the $C^1$-topology, so $\mathcal{N}^{-1}(0)$ is a $G_\delta$ set. Let
\[
D_N := K_N \cap \{N\beta \mid h_N(\beta) \in \mathbb{Q} \cap [0,1]\}.
\]
The residual set $G_N$ in Lemma~\ref{Yhax} is given by
\[
G_N = \mathcal{N}^{-1}(0) \cap (K_N \setminus D_N).
\]

By the periodicity of $\sin(2\pi x)$, for all $i \in \mathbb{N}$ and $k \in \mathbb{N}$,
\[
Df_{\beta + \frac{k}{N}}^i = Df_{\beta}^i,
\]
which implies $\mathcal{N}(f_{\beta + \frac{k}{N}}) = \mathcal{N}(f_{\beta})$. Since
\[
h_N\left(\beta + \frac{k}{N}\right) = h_N(\beta) + k,
\]then for each $k \in \mathbb{N}$ and each $\beta \in \frac{1}{N} G_N + \frac{k}{N}$, the map $f_{\beta,N}$ is singular conjugate to $R_\alpha$ with $\alpha = h_N(\beta)$. This proves the assertion.

By Lemma~\ref{Yhamdx}, $h_N(\mathcal{G}_N)$ is residual in $[0, N]$. By Lemma~\ref{resll}, the set $\frac{1}{N} h_N(\mathcal{G}_N)$ is residual in $[0, 1]$. Define
\[
\mathcal{R} := \bigcap_{N = N_0}^{\infty} \frac{1}{N} h_N(\mathcal{G}_N),
\]
where $N_0$ is as in \eqref{N_0}. Then $\mathcal{R}$ is a residual set in $[0, 1]$. Therefore, for every $N \geq N_0$ and every $\alpha \in \mathcal{R}$, there exists $\beta = h_N^{-1}(\alpha)$ such that $f_{\beta,N}$ is singular conjugate to $R_\alpha$.

		\vspace{1em}

		\section{\sc Proof of Theorem \ref{M1}}
		The proof is inspired by Herman~\cite[Chapter II]{H1}. Given a set \( S \), denote by \( \bar{S} \) its closure and by \( \mathrm{int}(S) \) its interior in a given topology. For simplicity, we denote $U_\delta:=U_\delta^{3+\tau-\eps}$. Namely,
		\[U_\delta=\left\{ \phi \in C^\infty(\mathbb{T})\ \middle|\ \|\phi\|_{C^{3+\tau-\eps}} < \delta \right\}.\]
		This is an open and path-connected subset of \( {H}^\infty \) under the \( C^\infty \) topology. We now consider the topological space \( X = (U^r_\delta, \tau) \), where \( \tau \) is the subspace topology inherited from \( {H}^\infty \).
		Define
		\[
		W_\delta := \Phi(\mathcal{F}_\al^\infty) \cap U_\delta, \quad A_\delta := \Phi(\mathcal{F}_\al^0) \cap U_\delta.
		\]
		Since $\al$ is a Diophantine number, by Proposition \ref{Hp123}(i), \( \Phi(\mathcal{F}_\al^\infty) \) is open in \( {H}^\infty \), hence \( W_\delta \) is open in \( U_\delta \). Let \( \bar{A}_\delta \) denote the closure of \( A_\delta \) in \( U_\delta \) with respect to the \( C^\infty \) topology. Then
		\[
		W_\delta \subseteq \mathrm{int}(A_\delta) \subseteq \mathrm{int}(\bar{A}_\delta),
		\]
		and \( \mathrm{int}(\bar{A}_\delta) \neq \emptyset \). By Proposition \ref{Hp123}(ii), $\Phi(\mathcal{F}_\al^0)\cap H^\infty$ is closed in $H^\infty$ under the $C^1$ topology. We have
		\[\bar{A}_\delta\subseteq \Phi(\mathcal{F}_\al^0) \cap U_\delta.\]
		Let \( \partial\bar{A}_\delta \) denote the boundary of \( \bar{A}_\delta \). Then for each $\phi\in \partial\bar{A}_\delta$, $F^\phi$ admits an invariant graph ${\Gamma}$ with rotation number $\al$. Recall
		\[{\Gamma}:=\{(x,{\Psi}(x))\ |\ x\in\R\}.\]
		
		\begin{itemize}
			\item \textbf{Assertion 1:} \( \partial\bar{A}_\delta \neq \emptyset \).
		\end{itemize}
		Indeed, suppose the contrary, \( \partial\bar{A}_\delta = \emptyset \), then \( \bar{A}_\delta = \mathrm{int}(\bar{A}_\delta) \), implying that \( \bar{A}_\delta \) is both open and closed in \( U_\delta \). Since \( U_\delta \) is connected, this yields \( \bar{A}_\delta = U_\delta \). However, we have \( \bar{A}_\delta \neq U_\delta \) according to (\ref{hckam2}). Contradiction.
		
		\begin{itemize}
			\item \textbf{Assertion 2:} For each \( \phi \in \partial\bar{A}_\delta \) and any small neighborhood \( O_\phi \subseteq U_\delta \), there exists \( \varphi \in O_\phi \) such that \( \varphi \notin \Phi(\mathcal{F}_\al^0) \).
		\end{itemize}
		Assume otherwise: there exists \( \phi^* \in \partial\bar{A}_\delta \) and a small open neighborhood \( O_{\phi^*} \subseteq U_\delta \) with \( O_{\phi^*} \subseteq \Phi(\mathcal{F}_\al^0) \), hence \( O_{\phi^*} \subseteq A_\delta \), contradicting \( \phi^* \in \partial\bar{A}_\delta \).
		
		\begin{itemize}
			\item \textbf{Assertion 3:} The invariant graph \( {\Gamma} \) admitted by $F^\phi$ is not of class \( C^\infty \).
		\end{itemize}
		By Proposition \ref{Hp123}(iii), \( \Phi \) is injective for irrational \( \alpha \), so \( \Phi^{-1} \) is well-defined on \( \Phi(\mathcal{F}_\al^0) \). If \( \phi \in \partial\bar{A}_\delta \) were such that \( g := \Phi^{-1}(\phi) \in \mathcal{F}_\al^\infty \), then \( \phi \in W_\delta \subseteq \mathrm{int}(\bar{A}_\delta) \), contradicting \( \phi \in \partial\bar{A}_\delta \). Since \( {\Psi} = g - \mathrm{Id} - \phi \), it follows that \( {\Psi}\notin C^\infty \).
		
		\vspace{1ex}
		
		Taking \( \delta = \frac{1}{n} \), we obtain a sequence \( \{\phi_n\}_{n \in \mathbb{N}} \) with
		\[
		\phi_n \in \partial\bar{A}_{1/n}, \quad \text{so that } \|\phi_n\|_{C^{3+\tau-\varepsilon}} \to 0,\ \text{but } \|\phi_n\|_{C^{3+\tau+\eps}} \nrightarrow 0.
		\]

		\vspace{2em}

		\medskip

	\end{CJK*}

\end{document}